\newcommand{\beq}[1]{ \begin{equation}\label{#1} }
\newcommand{\eeq}{\end{equation}}
\numberwithin{equation}{section}
\DeclareMathOperator*{\esssup}{ess\,sup}
\def\RR{{\mathbb R}}
\newtheorem{guess}{Theorem}
\newtheorem{uess}{Lemma}
\newtheorem{corollary}{Corollary}
\newtheorem{definition}{Definition}
\newtheorem{example}{Example}
\newtheorem{remark}{Remark}
\newtheorem{proposition}{Proposition}
\title{Explicit stability tests for linear neutral  delay equations 
using infinite series}
\author{Leonid Berezansky} 
\address{Department of Mathematics,
Ben-Gurion University of the Negev, 
Beer-Sheva 84105, Israel}
\email{brznsky@math.bgu.ac.il}
\author{Elena Braverman}
\address{Department of Mathematics and Statistics,
University of Calgary, 2500 University Drive N.W., Calgary, AB T2N 1N4, Canada}
\email{maelena@ucalgary.ca}
\thanks{Partially supported by the NSERC research grant RGPIN-2015-05976.}
\date{July 5, 2018; revised September 6, 2018}
\keywords{Neutral  equations in the Hale form,  
uniform exponential stability, Bohl-Perron theorem, variable delays,
explicit stability conditions}
\subjclass{34K40, 34K20, 34K06}
\begin{document}

\begin{abstract}
We obtain new explicit exponential stability conditions
for the linear scalar neutral equation with two bounded delays
$
(x(t)-a(t)x(g(t)))'+b(t)x(h(t))=0,
$
where $|a(t)| \leq A_0 < 1$, $0<b_0\leq b(t)\leq B_0$, 
assuming that all parameters of the
equation are measurable functions. 

To analyze exponential stability, we apply  the Bohl-Perron theorem and a reduction of a neutral equation to
an equation with an infinite number of non-neutral delay terms. This method has never been used before for this neutral equation; its application allowed to omit a usual restriction $|a(t)|<\frac{1}{2}$ in known asymptotic stability tests
and consider variable delays. 
\end{abstract}

\maketitle

\section{Introduction and Preliminaries}

Neutral differential equations have many applications in control theory, ecology, biology, physics,
see, for example, \cite{Fridman,Gop,H,KolmMysh,Kuang}. 
The aim of the present paper is to 
obtain new explicit exponential stability conditions for the equation
\begin{equation}
\label{1} 
(x(t)-a(t)x(g(t)))'=-b(t)x(h(t)).
\end{equation}

Its particular case with constant delays
\begin{equation}\label{2}
\left( x(t)-a(t)x(t-\sigma) \right)'+ b(t)x(t-\tau)=0,
\end{equation}
where $\tau,\sigma>0$, $a,b \in C([t_0,\infty), \RR)$, $b(t)\geq 0$,
has been extensively investigated \cite{TangZou,Yu}.

\begin{proposition}\label{proposition2} \cite{Yu}
If $\displaystyle \int_{t_0}^{\infty} b(s)ds=+\infty$, $|a(t)|\leq A_0 < 1$ and
$\displaystyle  \limsup_{t\rightarrow\infty} \int_{t-\tau} ^t b(s)ds<\frac{3}{2}-2A_0(2-A_0)$
then equation (\ref{2}) is asymptotically stable.
\end{proposition}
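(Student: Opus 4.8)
The plan is to reduce the neutral equation to a non-neutral delay equation and then run the classical Yorke--Myshkis ``$3/2$'' oscillation argument, with the extra neutral term responsible for the correction $-2A_0(2-A_0)$. First I would set $z(t)=x(t)-a(t)x(t-\sigma)$, so that $z'(t)=-b(t)x(t-\tau)$, and record the algebraic relation $x(t)=z(t)+a(t)x(t-\sigma)$. Since $|a(t)|\le A_0<1$, iterating this relation expresses $x$ as a uniformly convergent series in shifted copies of $z$,
\[
x(t)=\sum_{k=0}^{\infty}\Big(\prod_{j=0}^{k-1}a(t-j\sigma)\Big)\,z(t-k\sigma),
\qquad
\Big|\prod_{j=0}^{k-1}a(t-j\sigma)\Big|\le A_0^{\,k},
\]
so that $z$ itself satisfies a non-neutral equation with infinitely many delays, $z'(t)=-b(t)\sum_{k\ge0}\big(\prod_{j=0}^{k-1}a(t-\tau-j\sigma)\big)z(t-\tau-k\sigma)$.

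Second, I would establish boundedness. The hypothesis $\int_{t_0}^{\infty}b=\infty$ rules out convergence of $x$ to a nonzero limit (otherwise $z'=-b\,x(\cdot-\tau)$ would drive the bounded quantity $z$ to infinity) and forces the oscillatory behavior the argument exploits; integrating $z'=-b\,x(\cdot-\tau)$ over windows of length comparable to $\tau$, where $\int_{t-\tau}^{t}b$ is eventually below $\tfrac32$, and controlling the neutral series by its geometric majorant $A_0^{\,k}$ shows that $\limsup_{t\to\infty}|x(t)|=:L<\infty$.

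Third comes the core oscillation estimate. Assuming $L>0$ for contradiction, I would choose times $t^\ast$ at which $|x|$ approaches $L$, locate the preceding zero of the solution, and integrate the equation between them. The windowed delay integral $\int_{t-\tau}^{t}b<\tfrac32$ produces the Yorke constant $\tfrac32$, exactly as in the scalar delay case $x'=-b\,x(\cdot-\tau)$, while each neutral shift $a(t)x(t-\sigma)$ contributes at most $A_0L$ in magnitude; carrying these terms through the estimate and combining with the $\tfrac32$ bound yields the sharp correction $2A_0(2-A_0)L$ to the overshoot. Together these force $|x(t^\ast)|$ strictly below $L$, contradicting $|x(t^\ast)|\to L$, so $L=0$ and asymptotic stability follows.

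The main obstacle is precisely this sharp estimate: the naive bound $\sum_{k}b\,A_0^{\,k}=\tfrac{b}{1-A_0}$ is too lossy and would only give $\int_{t-\tau}^{t}b<\tfrac32(1-A_0)$, so no off-the-shelf multi-delay $3/2$ theorem applies directly. Extracting the exact constant $\tfrac32-2A_0(2-A_0)$ requires tracking the signs of the delayed terms---some reinforce and some cancel the overshoot---and bounding the cumulative neutral contribution between a zero and the following extremum without losing tightness. This is where essentially all the work lies; consistently, the constant degenerates to $0$ at $A_0=\tfrac12$, matching the familiar restriction $|a|<\tfrac12$ that the paper's own series method is designed to remove.
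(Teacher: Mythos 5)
First, a structural point: the paper does not prove Proposition~\ref{proposition2} at all --- it is quoted from Yu \cite{Yu} as known background, so there is no internal proof to compare against, and your attempt must be measured against Yu's original argument. Your outline does follow the same route as \cite{Yu}: pass to $z(t)=x(t)-a(t)x(t-\sigma)$, which is the differentiable quantity; dispose of nonoscillatory solutions using $\int_{t_0}^{\infty}b(s)\,ds=\infty$; and run a Yorke-type $\frac{3}{2}$ estimate with the neutral term supplying the correction. Your series expansion of $x$ in shifted copies of $z$ is also sound --- it is exactly the expansion $(E-S)^{-1}=\sum_{j=0}^{\infty}S^j$ that the paper records as Lemma~\ref{lemmaS} and exploits in (\ref{11}), though for the series manipulation to be legitimate on unbounded intervals you need an a priori bound on $x$ first.

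The genuine gap is that the entire quantitative content of the proposition --- why the admissible bound is exactly $\frac{3}{2}-2A_0(2-A_0)=\frac{3}{2}-4A_0+2A_0^2$ --- is asserted rather than derived. You write that ``carrying these terms through the estimate\dots yields the sharp correction $2A_0(2-A_0)L$'' and then concede that ``this is where essentially all the work lies''; but that work \emph{is} the proof. Concretely, one must relate $L=\limsup_{t\to\infty}|x(t)|$ and $\mu=\limsup_{t\to\infty}|z(t)|$ through the two-sided inequalities $\mu\leq(1+A_0)L$ and $L\leq\mu/(1-A_0)$, and then perform the delicate two-stage integration of $z'(t)=-b(t)x(t-\tau)$ over intervals adjacent to a zero \emph{of $z$}, not of $x$ (your sketch locates ``the preceding zero of the solution,'' but the zero/extremum structure that the $\frac{3}{2}$ argument exploits belongs to $z$, since $x$ is not differentiable a priori), keeping track of where $x$ admits the crude bound $L+\varepsilon$ versus the refined bound from the first integration --- this sign-and-regime bookkeeping is precisely what produces $4A_0-2A_0^2$ rather than a lossy geometric bound. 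Your boundedness step is likewise only gestured at: ``integrating over windows'' controls increments of $z$, not $x$, and converting that into $\limsup|x|<\infty$ requires its own lemma. So the proposal is a correct roadmap, with the right reduction and the right shape of the final estimate (including the correct observation that the constant vanishes at $A_0=\frac{1}{2}$), but as written it is a plan rather than a proof: both the boundedness lemma and the sharp-constant computation are missing.
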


In the non-neutral case $a(t)\equiv 0$,  Proposition \ref{proposition2}
turns into the sharp stability result with the constant $\frac{3}{2}$.
There are several improvements and extensions of Proposition~\ref{proposition2}, 
we cite the test obtained in \cite{TangZou}.

\begin{proposition}\label{proposition2a} \cite{TangZou}
Let $b(t) \geq 0$,  $\displaystyle \int_{t_0}^{\infty} b(s)ds=+\infty$, $|a(t)|\leq A_0 < 1$,
and at least one of the following conditions hold:
\vspace{2mm}

a) $\displaystyle A_0<\frac{1}{4},~~\limsup_{t\rightarrow\infty} \int_{t-\tau}^t b(s)ds<\frac{3}{2}-2A_0$; 
\vspace{2mm}

b) $\displaystyle \frac{1}{4}\leq A_0 <\frac{1}{2},~~\limsup_{t\rightarrow\infty} \int_{t-\tau}^t b(s)ds<
\sqrt{2 \left( 1-2A_0 \right) }$.
\vspace{2mm}

Then equation (\ref{2}) is asymptotically stable.
\end{proposition}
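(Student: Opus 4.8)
The plan is to reduce the neutral equation \rf{2} to a scalar comparison for the auxiliary function $z(t):=x(t)-a(t)x(t-\sigma)$, which satisfies $z'(t)=-b(t)x(t-\tau)$, and then to run a sharpened $3/2$-type oscillation argument while carefully tracking the coupling between $x$ and $z$ through the coefficient $a$.

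First I would establish that every solution is bounded and then reduce the claim $x(t)\to 0$ to $z(t)\to 0$. The reduction is a contraction estimate: from $x(t)=z(t)+a(t)x(t-\sigma)$ and $|a(t)|\le A_0<1$ one gets $|x(t)|\le |z(t)|+A_0|x(t-\sigma)|$, whence, for a bounded solution, $\limsup_{t\to\infty}|x(t)|\le \frac{1}{1-A_0}\limsup_{t\to\infty}|z(t)|$. Thus it suffices to show $z(t)\to 0$, and this is exactly where the hypothesis $A_0<1$ is used in an essential (not merely technical) way. Here the assumption $\int_{t_0}^\infty b=+\infty$ enters to dispose of the non-oscillatory alternative: if $x$ were eventually of one sign, integrating $z'(t)=-b(t)x(t-\tau)$ and using divergence of $\int b$ forces $x(t)\to 0$ directly.

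The core is the oscillation estimate for $z$ in the remaining case where $x$ oscillates. Set $M:=\limsup_{t\to\infty}|x(t)|$ and suppose $M>0$. On a sequence of near-extremal times $t_n$ of $x$ I would integrate $z'(t)=-b(t)x(t-\tau)$ over windows of length up to $2\tau$, bound the resulting variation of $z$ by that of $x$ on $[t_n-2\tau-\sigma,t_n]$, and then re-express the intermediate $x$-values through $z$ via the contraction inequality. This produces an inequality of the form $M\le \Phi\big(\limsup_{t}\int_{t-\tau}^t b,\,A_0\big)M$ with $\Phi$ an explicit function, increasing in each argument; asymptotic stability follows once $\Phi(\cdot,A_0)<1$, i.e. once $\limsup_t\int_{t-\tau}^t b$ lies below the threshold where $\Phi=1$. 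The two regimes (a) and (b) arise because the optimization defining $\Phi$ has two branches: for $A_0<\tfrac14$ the binding constraint is linear and gives $\tfrac32-2A_0$, while for $\tfrac14\le A_0<\tfrac12$ a quadratic (energy-type) constraint dominates and yields $\sqrt{2(1-2A_0)}$, the two values agreeing at $A_0=\tfrac14$.

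The main obstacle is precisely the coupling $z'(t)=-b(t)\,x(t-\tau)$ rather than $z'(t)=-b(t)\,z(t-\tau)$: one cannot simply quote the non-neutral sharp $3/2$ result for $z$. Controlling the extremal values of $x$ over a moving window of length $2\tau+\sigma$ in terms of those of $z$, without sacrificing the sharp constant, is the delicate point, and it is the balance between the delay term and the neutral term in this step that forces the case split at $A_0=1/4$ and pins down the constants $\tfrac32-2A_0$ and $\sqrt{2(1-2A_0)}$.
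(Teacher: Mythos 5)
This proposition is quoted in the paper from \cite{TangZou} without proof, so there is no internal argument to compare against; your proposal has to be judged as a self-contained proof attempt, and as such it has a genuine gap. The overall architecture is the right one, and it matches the strategy of the cited literature (Yu \cite{Yu}, Tang--Zou \cite{TangZou}): pass to $z(t)=x(t)-a(t)x(t-\sigma)$ with $z'(t)=-b(t)x(t-\tau)$, dispose of the non-oscillatory case using $\int_{t_0}^{\infty}b(s)\,ds=+\infty$, relate $\limsup|x|$ to $\limsup|z|$ through the contraction inequality $|x(t)|\le |z(t)|+A_0|x(t-\sigma)|$, and then run a $3/2$-type estimate on windows of length comparable to the delay. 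But the entire mathematical content of the proposition lives in the step you merely assert: ``this produces an inequality of the form $M\le \Phi\bigl(\limsup_t\int_{t-\tau}^t b,\,A_0\bigr)M$'' with $\Phi$ having ``two branches'' that happen to yield exactly $\frac{3}{2}-2A_0$ and $\sqrt{2(1-2A_0)}$. Nothing in your sketch derives $\Phi$, exhibits the optimization whose branches you invoke, or explains why the crossover occurs precisely at $A_0=\frac14$ (beyond the consistency check that the two expressions both equal $1$ there). In Tang--Zou's actual argument this step requires careful pointwise integration estimates between consecutive zeros of the oscillatory solution, splitting the integration window according to the size of $\int b$ and using a quadratic refinement (integrating $z'$ twice) in the regime that gives the square-root bound; reproducing those estimates while the right-hand side involves $x(t-\tau)$ rather than $z(t-\tau)$ is the whole difficulty, as you yourself acknowledge in your final paragraph. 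Acknowledging the obstacle is not the same as overcoming it.

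Two further points are glossed over. First, boundedness of solutions is announced (``First I would establish that every solution is bounded'') but never argued; in this setting boundedness is not free and is normally obtained by the same window estimates you postpone, so your logical order (boundedness first, then the contraction reduction $\limsup|x|\le \frac{1}{1-A_0}\limsup|z|$) is circular unless you supply an independent a priori bound. Second, in the non-oscillatory case, divergence of $\int b$ together with boundedness only gives $\liminf_{t\to\infty}|x(t)|=0$ directly; upgrading this to $x(t)\to 0$ requires using eventual monotonicity and convergence of $z$ and then unwinding the neutral relation $x(t)=z(t)+a(t)x(t-\sigma)$ with $A_0<1$ --- routine, but it is an additional argument, not an immediate consequence as your text suggests. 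In summary: correct skeleton, consistent with the known proof, but the quantitative core that produces the specific constants in parts a) and b) is missing, so the proposal does not constitute a proof.
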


It is easy to see that both propositions assume $|a(t)|<\frac{1}{2}$.
Additional restrictions are that the delays are constant, coefficients $a(t)$ and $b(t)$ are continuous functions. 
In the present paper, we omit some of these restrictions.

To analyze exponential stability, we apply  the Bohl-Perron theorem and a reduction of a neutral equation to
an equation with an infinite number of non-neutral delay terms. This method has never been used before for  neutral equation
\eqref{1}.



We consider (\ref{1}) under a number of the following assumptions:
\\
(a1) $a, b, g, h$ are Lebesgue measurable on $[0,+\infty)$, and there exist positive constants $A_0,b_0,B_0$ such that  
$\left|  a(t) \right| \leq A_0<1$, $0< b_0\leq b(t)\leq B_0$; \\
(a2)  mes~$\Omega=0\Longrightarrow$ mes~$g^{-1}(\Omega)=0$, 
where mes$~\Omega$ is  the Lebesgue measure of the set $\Omega$;\\
(a3) $g(t)\leq t$, $h(t)\leq t$, $\limsup_{t\rightarrow\infty} g(t)=\infty$, $\limsup_{t\rightarrow\infty} h(t)=\infty$;\\
(a4) $0\leq t-g(t)\leq \sigma$, $0\leq \delta\leq t-h(t)\leq \tau$, $t\geq t_0$ for some $t_0\geq 0$ and $t\geq t_0$.

Together with  (\ref{1}) we consider for each $t_0 \geq 0$ an initial value problem
\begin{equation}
\label{3}
(x(t)-a(t)x(g(t)))'+b(t)x(h(t))=f(t), ~~t\geq t_0,~~
x(t)=\varphi(t), ~ t \leq t_0
\end{equation}
and assume that for $f$ and $\varphi$  the following condition holds:
\\
(a5) $f:[t_0,+\infty)\rightarrow {\mathbb R}$ is Lebesgue measurable locally essentially
bounded, 
$\varphi :(-\infty,t_0)\rightarrow {\mathbb R}$ is a Borel
measurable bounded function.

\begin{definition} 
A function $x: {\mathbb R} \rightarrow {\mathbb R}$ is called {\bf a solution of problem}  (\ref{3}) if 
the difference $x(t)-a(t)x(g(t))$ is absolutely continuous on each interval $[t_0,c]$,
$x$ satisfies the  equation in (\ref{3}) for almost all $t\in [t_0,+\infty)$ and the initial condition
in (\ref{3}) for $t\leq t_0$.
\end{definition}

There exists a unique solution of problem (\ref{3}) if conditions (a1)-(a3),(a5) hold, see \cite{Gil,H}.

Consider the initial value problem for the equation with one non-neutral delay term
\begin{equation}
\label{5}
x^{\prime}(t)+b(t)x(h(t))=f(t),~ t \geq t_0, ~x(t)=0,~t\leq t_0,
\end{equation}
where $b(t), f(t)$ and  $h(t) \leq t$ are Lebesgue measurable locally essentially bounded functions.

\begin{definition} 
For each $s\geq t_0$ the solution $X(t,s)$ of the problem
\begin{equation*}
x^{\prime}(t)+b(t)x(h(t))=0,~ t \geq t_0, ~x(t)=0,~t<s,~x(s)=1
\end{equation*}
is called {\bf a fundamental function of equation}  (\ref{5}).
We assume $X(t,s)=0$ for  $0\leq t<s$.
\end{definition}

\begin{uess}  \cite{AzbSim}
\label{lemma2}
Let (a1)-(a3),(a5) hold. 
The solution of
problem  (\ref{5})  can be presented in the form
\begin{equation*}
x(t)=\int_{t_0}^t X(t,s)f(s)ds.
\end{equation*}
\end{uess}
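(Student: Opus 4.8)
The plan is to establish the variation-of-constants representation for the linear non-neutral delay equation \eqref{5} directly from the defining properties of the fundamental function $X(t,s)$. The claim is the standard Cauchy formula for delay equations, and the natural route is to verify that the function $y(t):=\int_{t_0}^t X(t,s)f(s)\,ds$ both satisfies the correct initial condition and, when substituted into \eqref{5}, reproduces the forcing term $f$. Since the fundamental function satisfies $X(t,s)=0$ for $t<s$ by convention, the integral vanishes for $t\le t_0$, so $y$ meets the zero initial data.

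The main step is to differentiate $y(t)$ and check the equation. Writing $y(t)=\int_{t_0}^t X(t,s)f(s)\,ds$ and differentiating the integral with respect to the upper limit and through the integrand, I would use that $X(s,s)=1$ (the jump normalization $x(s)=1$) to obtain the boundary contribution, together with $\partial_t X(t,s)=-b(t)X(h(t),s)$ for almost every $t$, which is exactly the equation that $X(\cdot,s)$ solves for fixed $s$. This yields
\[
y'(t)=X(t,t)f(t)+\int_{t_0}^t \partial_t X(t,s)\,f(s)\,ds
      =f(t)-b(t)\int_{t_0}^t X(h(t),s)f(s)\,ds.
\]
The term $\int_{t_0}^t X(h(t),s)f(s)\,ds$ should be recognized as $y(h(t))$: because $X(h(t),s)=0$ whenever $s>h(t)$, extending the integration limit from $h(t)$ up to $t$ changes nothing, so the integral equals $\int_{t_0}^{h(t)}X(h(t),s)f(s)\,ds=y(h(t))$. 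Substituting back gives $y'(t)+b(t)y(h(t))=f(t)$, which is precisely \eqref{5}, and by uniqueness of solutions $y\equiv x$.

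The main obstacle I anticipate is justifying the differentiation under the integral sign in the measurable, merely locally essentially bounded setting, rather than the classical continuous one. The delay argument $h(t)$ inside $X(h(t),s)$ and the fact that $X(\cdot,s)$ is only absolutely continuous (so the equation holds only almost everywhere) mean the Leibniz rule must be invoked in an appropriate absolutely continuous framework, with care that the boundary term $X(t,t)=1$ is captured correctly as $s\to t^-$. Since the paper cites \cite{AzbSim} for this lemma, I expect that these technical measure-theoretic justifications are handled in that reference, and the proof here can proceed at the formal level described above, appealing to the existence and uniqueness already asserted for \eqref{3} to identify $y$ with the genuine solution.
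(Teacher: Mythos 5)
The paper offers no proof of this lemma at all: it is imported verbatim from \cite{AzbSim}, so there is no internal argument to compare yours against, and I can only judge the proposal on its own merits. Your route --- verify that $y(t)=\int_{t_0}^t X(t,s)f(s)\,ds$ satisfies the zero initial condition and reproduces the equation in (\ref{5}), then invoke uniqueness --- is the standard and correct one, and the appeal to uniqueness is legitimate because the paper asserts unique solvability of (\ref{3}) (here with $a\equiv 0$, $\varphi \equiv 0$) under (a1)--(a3),(a5). The one step you leave formal, the Leibniz differentiation, is exactly where the measurable setting bites, and it can be closed without deferring to \cite{AzbSim}: since $X(\cdot,s)$ is absolutely continuous on $[s,\infty)$ with $X(s,s)=1$ and $\partial_u X(u,s)=-b(u)X(h(u),s)$ for a.e.\ $u \geq s$, write $X(t,s)=1+\int_s^t \partial_u X(u,s)\,du$, substitute into $y$, and apply Fubini (the integrand is jointly measurable and locally bounded, since $X$ is bounded on compact triangles $t_0 \leq s \leq t \leq t_1$ and $b$ is locally essentially bounded):
\begin{equation*}
y(t)=\int_{t_0}^t f(s)\,ds+\int_{t_0}^t\!\int_{t_0}^u \partial_u X(u,s)\,f(s)\,ds\,du,
\end{equation*}
which exhibits $y$ as absolutely continuous with $y'(t)=f(t)+\int_{t_0}^t\partial_t X(t,s)f(s)\,ds$ a.e., so the boundary term $X(t,t)=1$ is captured rigorously rather than as a limit $s\to t^-$. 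Two small points to add: your identification $\int_{t_0}^t X(h(t),s)f(s)\,ds=y(h(t))$ should also record the case $h(t)<t_0$, where every $s\in[t_0,t]$ satisfies $s>h(t)$, so the integral equals $0=y(h(t))$ by the zero initial condition (your ``extend the upper limit'' remark only treats $h(t)\ge t_0$); and the a.e.\ differentiability of $X(\cdot,s)$ has an exceptional null set depending on $s$, which is harmless precisely because the Fubini route differentiates the iterated integral in $t$ once, instead of differentiating under the integral for each fixed $s$. With these patches your argument is a complete proof, essentially the one found in Azbelev--Simonov.
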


\begin{definition} 
Equation (\ref{1}) is {\bf (uniformly) exponentially stable} 
if there exist positive numbers $M$ and $\gamma$ such that 
the solution of problem  (\ref{3})
with $f \equiv 0$ has the estimate 
\begin{equation*}
|x(t)|\leq M e^{-\gamma (t-t_0)} \sup_{t \in (-\infty,  t_0]}|\varphi(t)|,~~t\geq t_0,
\end{equation*}
where $M$ and $\gamma$ do not depend on $t_0 \geq 0$ and $\varphi$.
\end{definition}


Next, we present the Bohl-Perron theorem.

\begin{uess}\label{lemma3}\cite[Theorem 6.1]{Gil}
Let (a1)-(a2),(a4) and (a5) hold.
Assume that the solution of the problem 
\begin{equation}\label{10}
(x(t)-a(t)x(g(t)))'+b(t)x(h(t))=f(t), ~x(t)=0,~t\leq t_0
\end{equation}
is essentially bounded on $[t_0,+\infty)$ for any 
essentially bounded $f: [t_0,+\infty) \to {\mathbb R}$. 
Then equation (\ref{1}) is uniformly exponentially stable.
\end{uess}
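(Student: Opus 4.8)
The plan is to establish the Bohl--Perron principle in three stages: turn the admissibility hypothesis into a uniform operator bound, reduce a nonzero history to a forcing term, and then promote boundedness to exponential decay by an exponential change of variable. Let $T$ be the operator carrying an essentially bounded $f$ to the solution $x$ of \eqref{10} (with zero initial data). By hypothesis $T$ maps $L^\infty[t_0,\infty)$ into itself and is everywhere defined; since $x$ is produced from $f$ by an integral (variation-of-constants) representation, the graph of $T$ is closed, so the closed graph theorem gives a bound $\|Tf\|_\infty\le K\|f\|_\infty$. The constant $K$ can be taken independent of $t_0$: extending an $f$ on $[t_0,\infty)$ by zero to $[0,\infty)$ yields the same solution, because zero data and zero input force $x\equiv0$ on $[0,t_0]$; hence every $T_{t_0}$ is dominated by $T_0$. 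This uniformity is exactly what produces \emph{uniform} exponential stability at the end.

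It is convenient to work with the neutral combination $v_0(t)=x(t)-a(t)x(g(t))$, which is absolutely continuous by the definition of a solution, together with the substitution operators $(S_gu)(t)=u(g(t))$ and $(S_hu)(t)=u(h(t))$. Since $|a|\le A_0<1$ and (a2) makes $S_g$ bounded on $L^\infty$, the resolvent $R_0=(I-aS_g)^{-1}=\sum_{k\ge0}(aS_g)^k$ is bounded, and $x=R_0v_0$ up to a bounded history term. Substituting into \eqref{1} turns the homogeneous problem into the non-neutral equation $v_0'+b\,S_h R_0 v_0=f_\varphi$ with zero history, where the bounded delays $t-g(t)\le\sigma$, $t-h(t)\le\tau$ confine the influence of $\varphi$ to a forcing term with $\|f_\varphi\|_\infty\le C\sup|\varphi|$, $C=C(A_0,B_0,\sigma,\tau)$ independent of $t_0$. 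The solution operator of this non-neutral equation is $\mathcal T=(I-aS_g)T$, which is bounded, so $v_0=\mathcal Tf_\varphi$ and hence $x$ are already bounded by a multiple of $\sup|\varphi|$.

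To convert boundedness into decay, fix a small $\lambda>0$ and set $u(t)=e^{\lambda(t-t_0)}x(t)$, $v(t)=e^{\lambda(t-t_0)}v_0(t)=u(t)-\tilde a(t)u(g(t))$, where $\tilde a(t)=a(t)e^{\lambda(t-g(t))}$ and $\tilde b(t)=b(t)e^{\lambda(t-h(t))}$. A direct computation gives
\begin{equation*}
\bigl(u(t)-\tilde a(t)u(g(t))\bigr)'+\tilde b(t)u(h(t))=\lambda\bigl(u(t)-\tilde a(t)u(g(t))\bigr)+e^{\lambda(t-t_0)}f_\varphi(t).
\end{equation*}
Because $t-g(t)\le\sigma$ and $t-h(t)\le\tau$, we have $\|\tilde a\|_\infty\le A_0e^{\lambda\sigma}<1$ and $\|\tilde a-a\|_\infty,\ \|\tilde b-b\|_\infty=O(\lambda)$, while the weighted forcing stays bounded by a multiple of $\sup|\varphi|$ once $\lambda$ is also smaller than $|\ln A_0|/\sigma$.

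The main obstacle is that the substitution perturbs the \emph{neutral} coefficient $a\mapsto\tilde a$, and this difference appears underneath a derivative, so it is not a bounded perturbation on $L^\infty$. I would remove it by again passing to the non-neutral variable: with $R_\lambda=(I-\tilde aS_g)^{-1}=\sum_{k\ge0}(\tilde aS_g)^k$ (bounded since $\|\tilde aS_g\|\le A_0e^{\lambda\sigma}<1$) one has $u=R_\lambda v$, and the transformed equation becomes the non-neutral equation $v'+\tilde b\,S_hR_\lambda v=\lambda v+e^{\lambda(t-t_0)}f_\varphi$, whose principal part is merely $v'$. Writing it as $v'+bS_hR_0v=e^{\lambda(t-t_0)}f_\varphi+\mathcal P_\lambda v$ with $\mathcal P_\lambda=\lambda I+bS_hR_0-\tilde bS_hR_\lambda$, every piece of $\mathcal P_\lambda$ is now a \emph{bounded} operator of norm $O(\lambda)$; the only nontrivial point is the norm continuity $R_\lambda\to R_0$, which follows from $\tilde a\to a$ in $L^\infty$ together with the geometric bound $\|(\tilde aS_g)^k\|\le(A_0e^{\lambda\sigma})^k$. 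Thus $v=\mathcal T\bigl(e^{\lambda(\cdot-t_0)}f_\varphi+\mathcal P_\lambda v\bigr)$, i.e. $(I-\mathcal T\mathcal P_\lambda)v=\mathcal T\bigl(e^{\lambda(\cdot-t_0)}f_\varphi\bigr)$, and for $\lambda$ small enough $\|\mathcal T\mathcal P_\lambda\|<1$, so a Neumann series yields $\|v\|_\infty\le M_0\sup|\varphi|$. Then $\|u\|_\infty=\|R_\lambda v\|_\infty\le M\sup|\varphi|$, which is precisely $|x(t)|\le Me^{-\lambda(t-t_0)}\sup|\varphi|$ with $M$ and $\lambda$ independent of $t_0$, establishing uniform exponential stability. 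The points needing care throughout are the $L^\infty$-boundedness of the substitution operators under (a2), the absolute continuity of $v_0$ and $v$, and the uniform-in-$t_0$ control of all constants.
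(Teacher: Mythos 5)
You should first know that the paper contains no internal proof of this lemma: it is imported verbatim from Gil' \cite[Theorem 6.1]{Gil}, so your argument stands on its own merits. Your outline is the classical Bohl--Perron scheme, and most of it checks out. The closed-graph step is legitimate (on each finite interval the solution of \eqref{10} depends continuously on $f$ through $(E-S)^{-1}$ and a Gronwall-type bound, so the graph of $T$ is closed); the extension-by-zero trick correctly makes $K$ independent of the starting point, by uniqueness forcing $x\equiv 0$ before the support of $f$; the reduction of the history $\varphi$ to a forcing term $f_\varphi$ supported on $[t_0,t_0+\max\{\sigma,\tau\}]$ with $\|f_\varphi\|_\infty\le C\sup|\varphi|$ is sound, since $g(t)<t_0$ forces $t<t_0+\sigma$ and $h(t)<t_0$ forces $t<t_0+\tau$; and the weighted identity for $u=e^{\lambda(t-t_0)}x$, together with $\|\tilde a\|_\infty\le A_0e^{\lambda\sigma}<1$, $\|\tilde a-a\|_\infty,\|\tilde b-b\|_\infty=O(\lambda)$ and $\|R_\lambda-R_0\|=O(\lambda)$ via the resolvent identity $R_\lambda-R_0=R_\lambda(\tilde a-a)S_gR_0$, is computed correctly. (One background point you share with the paper: boundedness of the substitution operator $S_h$ on $L^\infty$ needs the analogue of (a2) for $h$, or pointwise-defined solutions, which the Azbelev--Gil' framework supplies.)

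The genuine gap is the final fixed-point step. Writing $v=\mathcal T\bigl(e^{\lambda(\cdot-t_0)}f_\varphi+\mathcal P_\lambda v\bigr)$ and inverting $I-\mathcal T\mathcal P_\lambda$ by a Neumann series in $L^\infty[t_0,\infty)$ is circular as stated: the bound $\|\mathcal T F\|_\infty\le\|\mathcal T\|\,\|F\|_\infty$ is available only for $F\in L^\infty$, and your $F$ contains $\mathcal P_\lambda v$, which is bounded only if $v$ already is --- but $v=e^{\lambda(t-t_0)}v_0$ is exactly the quantity whose boundedness is being proved, and a priori it may grow like $e^{\lambda t}$. So the Neumann-series argument is being applied to an element not yet known to lie in the space. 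The repair is standard, and it is precisely the device the paper itself deploys in the proof of Theorem~\ref{theorem1}: all operators involved ($\mathcal T$, $S_h$, $R_0$, $R_\lambda$, hence $\mathcal P_\lambda$) are causal (Volterra), since $g(t)\le t$ and $h(t)\le t$; therefore for every finite $I=[t_0,t_1]$ the restricted estimate $|v|_I\le\|\mathcal T\|\bigl(\|e^{\lambda(\cdot-t_0)}f_\varphi\|_\infty+\|\mathcal P_\lambda\|\,|v|_I\bigr)$ holds, and $|v|_I<\infty$ because $v_0$ is absolutely continuous, so $v$ is continuous and locally bounded. Since $\|\mathcal T\|\,\|\mathcal P_\lambda\|<1$ with all constants independent of $t_1$ (and of $t_0$, by your extension argument), rearranging gives $|v|_I\le M_0\sup|\varphi|$ uniformly in $t_1$, hence $v\in L^\infty$, and only then does your identity $\|u\|_\infty=\|R_\lambda v\|_\infty\le M\sup|\varphi|$, i.e.\ $|x(t)|\le Me^{-\lambda(t-t_0)}\sup|\varphi|$, follow. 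With that finite-interval insertion your proof of the lemma is complete and matches the standard literature argument that the paper delegates to \cite{Gil}.
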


\begin{remark}
\label{remark1a}
In Lemma~\ref{lemma3} we can consider
boundedness of solutions not for all essentially bounded functions $f: [t_0,+\infty) \to {\mathbb R}$  but only
for those which satisfy $f(t)=0$, $t \in [t_0,t_1)$, for any fixed $t_1>t_0$, see \cite{BB3}.
We will further apply this fact in the paper without an additional reference.
\end{remark}

Consider now a linear equation with a measurable single delay and a locally essentially bounded non-negative coefficient
\begin{equation}\label{9}
x^{\prime}(t)+b(t)x(h_0(t))=0, ~~b(t)\geq 0,~~ 0\leq t-h_0(t)\leq \tau_0.
\end{equation}
Let $X_0(t,s)$ be a fundamental function of (\ref{9}).

\begin{uess}\label{lemma4}\cite{BB3}
Assume that $X_0(t,s)>0$ , $t\geq s\geq t_0$. Then 
$$
\int_{t_0+\tau_0}^t X_0(t,s) b(s)ds\leq 1.
$$
\end{uess}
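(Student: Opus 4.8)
The plan is to show that the integral in question equals $1-y(t)$, where $y$ is the solution of the homogeneous equation \eqref{9} issued from constant initial data, and then to prove $y\ge 0$ using the positivity of $X_0$. Write $\theta=t_0+\tau_0$ and set
\[
I(t)=\int_{\theta}^t X_0(t,s)b(s)\,ds,\quad t\ge\theta,\qquad I(t)=0,\quad t\le\theta .
\]
First I would apply Lemma~\ref{lemma2} with the initial point $\theta$ in place of $t_0$ (which is legitimate since $\theta\ge t_0\ge 0$ and the hypotheses are global) to the forcing term $f\equiv b$; this identifies $I$ as the solution of
\[
I'(t)+b(t)I(h_0(t))=b(t),\qquad t\ge\theta,\qquad I(t)=0,\quad t\le\theta .
\]
Equivalently, one may differentiate under the integral sign using $X_0(t,t)=1$, the defining relation $\partial_t X_0(t,s)=-b(t)X_0(h_0(t),s)$, and the fact that $X_0(h_0(t),s)=0$ for $s>h_0(t)$.

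Next I would bring in the constant function as a comparison solution. Since $w\equiv 1$ satisfies $w'(t)+b(t)w(h_0(t))=b(t)$, the difference $y:=1-I$ solves the homogeneous problem
\[
y'(t)+b(t)y(h_0(t))=0,\qquad t\ge\theta,\qquad y(t)=1,\quad t\le\theta .
\]
Because $X_0>0$ and $b\ge 0$, the integrand defining $I$ is nonnegative, so the assertion $I(t)\le 1$ is \emph{equivalent} to $y(t)\ge 0$ for all $t\ge\theta$, which is what I would prove.

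The main obstacle is precisely this last step, $y\ge 0$, and this is exactly where the hypothesis $X_0>0$ must be used in an essential way; $y$ need not stay nonnegative without it. Indeed, when $t-h_0(t)\equiv\tau_0$ one has $y(t)=1-\int_{\theta}^{t}b(s)\,ds$ on $[\theta,\theta+\tau_0]$, which can change sign once $X_0$ is allowed to vanish. I would argue through a first-zero analysis: on the maximal interval $[\theta,T)$ where $y>0$, the identity $y'=-b\,y(h_0)$ shows $y$ is nonincreasing, so if a zero exists let $T$ be the first one; then $y(T)=0$ and the history of $y$ on $(-\infty,T]$ is nonnegative. Representing $y$ on $[T,\infty)$ through the fundamental function and this nonnegative history, and invoking $X_0>0$, $b\ge 0$, forces $y(t)\le 0$ beyond $T$. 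Turning this forced sign change into a contradiction with $X_0(t,s)>0$ is the crux, and it is the content of the nonoscillation--positivity correspondence for \eqref{9}: I would finish it by the standard argument of that theory (cf.\ \cite{BB3}), which yields $y\ge 0$ and hence $I=1-y\le 1$, as claimed.
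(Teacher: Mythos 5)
Your reduction is correct and coincides with the first half of the argument in \cite{BB3}: by Lemma~\ref{lemma2} (applied with initial point $\theta=t_0+\tau_0$, which is legitimate since the fundamental function does not depend on the starting point), $I(t)=\int_{\theta}^t X_0(t,s)b(s)\,ds$ solves $I'+bI(h_0)=b$ with zero history, so $y=1-I$ solves the homogeneous equation (\ref{9}) with $y\equiv 1$ on $(-\infty,\theta]$, and the lemma reduces to $y\ge 0$. The genuine gap is exactly where you locate "the crux": your first-zero analysis cannot be completed as sketched. From the first zero $T$ and the representation
\[
y(t)=X_0(t,T)\,y(T)-\int_T^t X_0(t,s)\,b(s)\,y(h_0(s))\,\chi_{\{h_0(s)<T\}}\,ds ,
\]
you correctly conclude $y\le 0$ on $[T,\infty)$, but this is \emph{not} in conflict with $X_0>0$: positivity of the fundamental function does not prevent individual solutions from being nonpositive or even changing sign (for instance $-X_0(\cdot,t_0)$ is a negative solution, and your own example shows solutions with nonnegative history may cross zero). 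Moreover, the case $y\equiv 0$ on $[T,\infty)$ is consistent with everything and gives $I\equiv 1$, so there is literally nothing to contradict there; what must be excluded is $y(t)<0$, and your sketch supplies no mechanism for that. Deferring this step to "the standard argument of that theory (cf.~\cite{BB3})" leaves the substantive content unproved, and is essentially circular, since \cite{BB3} is the very source of the statement.

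What actually closes the gap is a comparison (generalized characteristic equation) argument rather than a first-zero argument. Set $v(t)=X_0(t,t_0)/X_0(\theta,t_0)$ for $t\ge\theta$ and $v(t)=1$ for $t\le\theta$. Since $\partial_t X_0(t,t_0)=-b(t)X_0(h_0(t),t_0)\le 0$, the function $X_0(\cdot,t_0)$ is positive and nonincreasing, and for $t\ge\theta$ one checks $\psi(t):=v'(t)+b(t)v(h_0(t))\le 0$: equality holds when $h_0(t)\ge\theta$, while for $t_0\le h_0(t)<\theta$ (note $h_0(t)\ge t-\tau_0\ge t_0$) it reads
\[
\psi(t)=b(t)\left(1-\frac{X_0(h_0(t),t_0)}{X_0(\theta,t_0)}\right)\le 0,
\]
by monotonicity. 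Then $z=y-v$ has zero history on $(-\infty,\theta]$ and satisfies $z'+bz(h_0)=-\psi\ge 0$, so Lemma~\ref{lemma2} and $X_0>0$ give $z(t)=\int_\theta^t X_0(t,s)\bigl(-\psi(s)\bigr)\,ds\ge 0$, whence $y\ge v>0$ and $I=1-y\le 1$. This positive-subsolution step is precisely what the hypothesis $X_0>0$ buys, and it is the part your proposal leaves out.
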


\begin{uess}\label{lemma5}\cite{BB3,GL}
If for some $t_0\geq 0$ the inequality
$\displaystyle \int_{h_0(t)}^t b(s) ds\leq \frac{1}{e}$ holds for $t\geq t_0$
then $X_0(t,s)>0$ for $t\geq s\geq t_0$.
If in addition $b(t)\geq b_0>0$ then equation (\ref{9}) is 
exponentially stable.
\end{uess}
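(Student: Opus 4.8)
The statement has two parts, and I would treat positivity of $X_0$ first, since exponential decay follows from it rather cheaply. For a fixed $s\ge t_0$ I would look for a positive solution of \eqref{9} on $[s,\infty)$ written in exponential form $x(t)=\exp\bigl(-\int_s^t\lambda(r)\,dr\bigr)$. Substituting this ansatz into $x'(t)+b(t)x(h_0(t))=0$ and cancelling $x(t)>0$ reduces the delay equation to the \emph{generalized characteristic equation}
\begin{equation*}
\lambda(t)=b(t)\exp\left(\int_{h_0(t)}^t\lambda(r)\,dr\right),\qquad t\ge s,
\end{equation*}
for a nonnegative, locally integrable $\lambda$. I would solve it by the monotone iteration $\lambda_0\equiv 0$ and $\lambda_{n+1}(t)=b(t)\exp\bigl(\int_{h_0(t)}^t\lambda_n(r)\,dr\bigr)$.

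The decisive point --- and the place where the hypothesis $\int_{h_0(t)}^t b(s)\,ds\le\frac1e$ is used --- is that the order interval $\{\lambda:0\le\lambda(t)\le e\,b(t)\}$ is invariant under this iteration: if $\lambda_n\le e\,b$ then $\int_{h_0(t)}^t\lambda_n\le e\int_{h_0(t)}^t b\le e\cdot\frac1e=1$, whence $\lambda_{n+1}(t)\le b(t)\,e^{1}=e\,b(t)$. Because the right-hand side is monotone in $\lambda$ and $\lambda_0=0$, the iterates increase and stay trapped in this interval, so they converge pointwise a.e. to a solution $\lambda$ with $0\le\lambda\le e\,b$; monotone convergence identifies the limit as a genuine solution of the characteristic equation. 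This produces a positive solution of \eqref{9} on $[s,\infty)$, and by the standard equivalence between solvability of the generalized characteristic equation and positivity of the fundamental function (as in \cite{GL}) we conclude $X_0(t,s)>0$ for $t\ge s\ge t_0$. The main obstacle is precisely this invariance computation: the two occurrences of $e$ must close the loop exactly, and one must carry the argument through in the merely measurable, locally integrable setting rather than the continuous one.

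For the second assertion, suppose in addition $b\ge b_0>0$. From positivity and $b\ge0$ we get $\partial_t X_0(t,s)=-b(t)X_0(h_0(t),s)\le0$, so $X_0(\cdot,s)$ is nonincreasing and $0<X_0(t,s)\le X_0(s,s)=1$. I would then bound $\int_{t_0}^tX_0(t,s)\,ds$ uniformly by splitting it at $t_0+\tau_0$: the piece over $[t_0,t_0+\tau_0]$ is at most $\tau_0$ because $X_0\le1$, while for the remaining piece Lemma~\ref{lemma4} together with $b\ge b_0$ gives
\begin{equation*}
b_0\int_{t_0+\tau_0}^t X_0(t,s)\,ds\le\int_{t_0+\tau_0}^t X_0(t,s)b(s)\,ds\le1 .
\end{equation*}
Hence $\int_{t_0}^tX_0(t,s)\,ds\le\tau_0+\frac1{b_0}$, uniformly in $t\ge t_0$ and in $t_0\ge0$. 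By the representation in Lemma~\ref{lemma2}, every solution of \eqref{9} with bounded forcing $f$ then satisfies $|x(t)|\le(\tau_0+\frac1{b_0})\,\esssup|f|$, so bounded inputs produce bounded solutions; a Bohl--Perron argument for the non-neutral equation \eqref{9} (cf.\ Lemma~\ref{lemma3}) upgrades this uniform bound into uniform exponential stability. The only care needed here is that every constant is independent of $t_0$, which the estimates above guarantee.
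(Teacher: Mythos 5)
This lemma is quoted in the paper without proof (it is imported from \cite{BB3,GL}), and your argument is exactly the standard one in those references: the monotone iteration $\lambda_{n+1}(t)=b(t)\exp\bigl(\int_{h_0(t)}^t\lambda_n\bigr)$ on the generalized characteristic equation, with the invariant cone $0\le\lambda\le e\,b$ closing precisely because $e\cdot\frac1e=1$, for positivity; and the split estimate $\int_{t_0}^t X_0(t,s)\,ds\le\tau_0+\frac1{b_0}$ via Lemma~\ref{lemma4} combined with the solution representation of Lemma~\ref{lemma2} and a Bohl--Perron argument for exponential stability. Your proof is correct and takes essentially the same route as the cited sources, including the care you flag about running the characteristic-equation argument in the measurable setting and handling the prehistory near $t=s$ through the standard equivalence with positivity of the fundamental function.
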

For a fixed bounded interval $I=[t_0,t_1]$, consider the space $L_I:=L_{\infty}[t_0,t_1]$ of all essentially bounded on $I$
functions with the 
norm $|y|_I= \esssup_{t\in I} |y(t)|$, for an unbounded interval
denote $\displaystyle \|f\|_{[t_0,+\infty)}=\esssup_{t\geq t_0} |f(t)|$, $E$ is the identity operator.
Define a linear operator 
on the space 
$L_I$ as 
$$\displaystyle 
(Sy)(t)=\left\{\begin{array}{ll}
a(t)y(g(t)),& g(t)\geq t_0,\\
0,& g(t)<t_0,\\
\end{array}\right. 
$$
where $|a(t)| \leq A_0<1$ is Lebesgue measurable, $g(t) \leq t$ is measurable satisfying (a2). 

\begin{uess}\label{lemmaS} \cite{ABR}
$E-S$ is invertible in the space 
$L_I$ for any $t_1 >t_0$, 
$\displaystyle (E-S)^{-1}=\sum_{j=0}^{\infty} S^j$, where $S^0=E$, 
and the operator norm satisfies
\begin{equation}
\label{star}
\|(E-S)^{-1}\|_{L_I \to L_I} \leq \frac{1}{1-\|a\|_I} \, .
\end{equation}
\end{uess}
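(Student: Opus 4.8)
The plan is to treat $E-S$ by the standard Neumann series for a bounded linear operator of norm strictly less than one on a Banach space. The whole argument reduces to a single estimate: I claim that $S$ maps $L_I$ into itself with operator norm $\|S\|_{L_I\to L_I}\le \|a\|_I\le A_0<1$. Granting this, the partial sums $\sum_{j=0}^N S^j$ form a Cauchy sequence in the Banach algebra of bounded operators on $L_I$, since $\bigl\|\sum_{j=M}^N S^j\bigr\|\le \sum_{j=M}^N \|S\|^j\to 0$ as $M,N\to\infty$; the limit $T=\sum_{j=0}^\infty S^j$ is therefore a bounded operator. From the telescoping identity $(E-S)\sum_{j=0}^N S^j=E-S^{N+1}$ together with $\|S^{N+1}\|\le \|S\|^{N+1}\to 0$, letting $N\to\infty$ yields $(E-S)T=E$, and similarly $T(E-S)=E$, so $E-S$ is invertible with $(E-S)^{-1}=T$. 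The norm bound $\|(E-S)^{-1}\|\le \sum_{j=0}^\infty \|S\|^j = (1-\|S\|)^{-1}\le (1-\|a\|_I)^{-1}$ then follows from the triangle inequality and monotonicity.

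The crux is thus the norm estimate for the weighted composition operator $S$, and here assumption (a2) is essential. First I would verify that $S$ is well defined on equivalence classes: if $y=\tilde y$ almost everywhere on $I$, then the set $\Omega$ where they differ is null, so by (a2) $g^{-1}(\Omega)$ is null, whence $y(g(t))=\tilde y(g(t))$ for almost all $t$ and $Sy=S\tilde y$ in $L_I$. The same device controls the norm: for $y\in L_I$ put $\Omega=\{s\in I:|y(s)|>|y|_I\}$, which is null by the definition of the essential supremum, so $g^{-1}(\Omega)$ is null by (a2). On the first branch $g(t)\ge t_0$ combined with $g(t)\le t\le t_1$ gives $g(t)\in I$, so for almost every such $t$ one has $g(t)\notin\Omega$ and hence $|y(g(t))|\le |y|_I$; therefore $|(Sy)(t)|\le |a(t)|\,|y(g(t))|\le \|a\|_I\,|y|_I$ a.e. (the second branch contributes $0$). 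Taking the essential supremum gives $|Sy|_I\le \|a\|_I\,|y|_I$, that is, $\|S\|_{L_I\to L_I}\le\|a\|_I$.

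Since $\|a\|_I\le A_0<1$ by (a1), the hypothesis $\|S\|<1$ needed above holds uniformly, which is why invertibility, the series representation, and the stated norm bound are valid for every $t_1>t_0$. I expect the main obstacle to be precisely the measure-theoretic step showing that $S$ is simultaneously well defined and nonexpanding in the composition variable $y\mapsto y\circ g$; everything after that is the textbook geometric-series computation in a Banach algebra.
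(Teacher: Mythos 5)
Your proof is correct: the paper itself gives no argument for this lemma, citing \cite{ABR}, and your Neumann-series computation in the Banach algebra of bounded operators on $L_I$, resting on the key estimate $\|S\|_{L_I\to L_I}\le\|a\|_I\le A_0<1$ obtained via (a2), is exactly the standard proof underlying that citation. The only point worth tightening is measurability of the composition itself: for an arbitrary representative of $y$ the function $y\circ g$ need not be Lebesgue measurable (a Lebesgue measurable function composed with a measurable $g$ can fail to be measurable), so one should first pass to a Borel representative of $y$, for which $y\circ g$ is measurable, and then invoke (a2) --- precisely as you already do for the a.e.\ bounds --- to conclude that $Sy$ is a well-defined element of $L_I$ independent of the representative chosen.
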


\section{Stability Results}


Consider initial value problem (\ref{10})
with  $\|f\|_{[t_0,+\infty)}< +\infty$. 
Further, we assume that a product equals one if it contains no factors, for example, 
$\displaystyle \prod_{k=0}^{-1}=1$.

Define
\begin{equation}
\label{tau0}
\tau_0 = \frac{1- \| a \|_{[t_0,+\infty)}}{e \| b \|_{[t_0,+\infty)}}.
\end{equation}

\begin{guess}\label{theorem1}
 Assume that (a1),(a2),(a4) hold and there is $\alpha\in [0,1]$ 
such that for $t\geq t_0$,
$a(t) \geq a_0>0$, $\alpha\tau_0\leq \delta$  and  
\begin{equation}\label{1a}
\displaystyle \tau \|b\|_{[t_0,\infty)}+\frac{\sigma \|a\|_{[t_0,\infty)} 
\|b\|_{[t_0,\infty)} (1-a_0)}{(1-\|a\|_{[t_0,\infty)})^2 }< \left( 1-\|a\|_{[t_0,\infty)}  \right) \left(1+\frac{\alpha}{e}\right).
\end{equation}

Then equation (\ref{1}) is uniformly exponentially stable.
\end{guess}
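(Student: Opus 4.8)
The plan is to apply the Bohl-Perron theorem (Lemma~\ref{lemma3}), so the goal reduces to showing that the solution of the initial value problem \eqref{10} stays essentially bounded on $[t_0,+\infty)$ whenever $f$ is essentially bounded. The starting point is the stated device of the paper: rewrite the neutral equation so that the neutral term is absorbed. Introduce the operator $S$ and write the equation in \eqref{10} as $(x - Sx)' + b(t)x(h(t)) = f$. Since Lemma~\ref{lemmaS} gives that $E-S$ is invertible with $(E-S)^{-1} = \sum_{j\geq 0} S^j$, I expect to set $y = (E-S)x$, so that $x = \sum_{j\geq 0} S^j y$, and thereby convert \eqref{10} into an equation for $y$ of the form $y'(t) + b(t)\bigl(\sum_{j\geq 0} S^j y\bigr)(h(t)) = f(t)$, i.e. a non-neutral equation with an \emph{infinite number of delayed terms}, each a composition of the maps $g$ and $h$ with the coefficient built from products of $a$. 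This is the ``reduction'' announced in the abstract and Section~1.

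The next step is to choose a suitable comparison equation with positive fundamental function to serve as the principal part. The natural candidate is the single-delay equation \eqref{9} with delay $h_0$ chosen so that $\int_{h_0(t)}^t b \leq 1/e$; the role of $\tau_0$ defined in \eqref{tau0} is exactly to guarantee $\|b\| \tau_0 = (1-\|a\|)/e \leq 1/e$, so Lemma~\ref{lemma5} yields $X_0(t,s)>0$ and exponential stability of the comparison equation, and Lemma~\ref{lemma4} supplies the crucial a priori bound $\int X_0(t,s)b(s)\,ds \leq 1$. The parameter $\alpha$ and the condition $\alpha\tau_0 \leq \delta$ let me split the true delay $h$ relative to $h_0$ and absorb part of the delay discrepancy, which is why the right-hand side of \eqref{1a} carries the factor $(1+\alpha/e)$. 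I would solve for $x$ via the variation-of-constants formula of Lemma~\ref{lemma2} applied to the comparison kernel $X_0$, moving the difference between the true operator and the comparison operator to the right-hand side as a perturbation.

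The heart of the estimate is then a fixed-point / contraction argument on the bounded interval $I=[t_0,t_1]$: I would bound the $L_I$-norm of $x$ by the norm of $f$ plus a contraction constant times $|x|_I$, where the contraction constant is precisely the left-hand side of \eqref{1a} divided by $(1-\|a\|)$. Two contributions must be estimated. The first is the genuine delay term $b(t)[x(h(t)) - x(h_0(t))]$, controlled using $\int_{h(t)}^{h_0(t)} b \leq (\tau - \delta)\|b\|$-type bounds together with Lemma~\ref{lemma4}; this produces the $\tau\|b\|$ term. The second, and this is where I expect the main technical obstacle, is the infinite neutral series $\sum_{j\geq 1} S^j$: one must estimate the contribution of the neutral shift $\sigma$ through every iterate $S^j$. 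Here the geometric bound $\sum_{j\geq 1}\|a\|^j \|\text{(per-level shift)}\|$ must be summed; using the Lipschitz-type control of $x$ over the shift window of length $\sigma$ and the normalization $\|(E-S)^{-1}\|\leq (1-\|a\|)^{-1}$ from \eqref{star} should produce the factor $\frac{\sigma\|a\|\|b\|(1-a_0)}{(1-\|a\|)^2}$ appearing in \eqref{1a}, with the $(1-a_0)$ reflecting the lower bound $a_0$ on $a(t)$ that sharpens the telescoping of the series.

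Finally, since the contraction constant is strictly less than $1$ by hypothesis \eqref{1a}, the bound $|x|_I \leq C\|f\|$ holds with $C$ independent of $t_1$; letting $t_1\to\infty$ (and invoking Remark~\ref{remark1a} to start $f$ from a later time, avoiding the boundary layer near $t_0$) gives essential boundedness of $x$ on $[t_0,+\infty)$ uniformly in $t_0$. Lemma~\ref{lemma3} then delivers uniform exponential stability, completing the proof.
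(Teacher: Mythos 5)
Your proposal follows essentially the same route as the paper: Bohl--Perron via Lemma~\ref{lemma3} and Remark~\ref{remark1a}, the substitution $y=(E-S)x$ with Lemma~\ref{lemmaS} turning \eqref{10} into the infinite-delay equation \eqref{11}, a comparison equation with positive fundamental function handled through Lemmas~\ref{lemma4} and \ref{lemma5}, and a contraction estimate on $I=[t_0,t_1]$ with constant below $1$ by \eqref{1a}. The roles you assign to $\alpha\tau_0\le\delta$ and to each term of \eqref{1a} are the paper's.

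One step, as literally written, would fail. You take the principal part to be $b(t)x(h_0(t))$ with the \emph{original} coefficient $b$, justified by $\|b\|\tau_0=(1-\|a\|)/e\le 1/e$ (here $\|\cdot\|$ abbreviates $\|\cdot\|_{[t_0,\infty)}$). The paper's comparison equation \eqref{12} is instead $x'(t)+B(t)x(t-\alpha\tau_0)=0$ with the \emph{aggregated} coefficient $B(t)=b(t)\sum_{j=0}^{\infty}\prod_{k=0}^{j-1}a\bigl(h(g^{[k]}(t))\bigr)$, and Lemma~\ref{lemma5} is verified via $\alpha\tau_0\sup B\le \frac{1-\|a\|}{e\|b\|}\cdot\frac{\|b\|}{1-\|a\|}=\frac{1}{e}$: the factor $1-\|a\|$ in \eqref{tau0} exists precisely to compensate the growth of $B$, which your check with $b$ misses. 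The choice of $B$ is not cosmetic: if the principal part carries only $b(t)$, then after matching the $j=0$ term of the series the tail $\sum_{j\ge 1}$ enters the perturbation wholesale, contributing roughly $\frac{\|a\|}{1-\|a\|}\|b\|\,|y|_I$ --- proportional to $|y|_I$ rather than to $\sigma|y'|_I$ --- and the resulting condition would be much worse than \eqref{1a}. With $B$ as principal part, every term of the series is matched against $B(t)y(t-\alpha\tau_0)$, each difference is $\int_{h(g^{[j]}(t))}^{t-\alpha\tau_0}y'(\xi)\,d\xi$ over a window of length at most $\tau-\alpha\tau_0+j\sigma$ (nonnegative exactly because $\alpha\tau_0\le\delta$), and summing $j\|a\|^{j}$ yields your factor $\frac{\sigma\|a\|}{(1-\|a\|)^2}$. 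Your description of the series estimate (``per-level shift'', telescoping) indicates you in effect intend this aggregated matching, so the gap is repairable; but you should also pin down the provenance of $(1-a_0)$, which you leave vague: it is $b(s)/B(s)=\bigl(\sum_{j}\prod_{k}a\bigr)^{-1}\le\bigl(\sum_{j}a_0^{j}\bigr)^{-1}=1-a_0$, the only place the hypothesis $a(t)\ge a_0>0$ is used. Finally, the paper closes the estimate on $|y|_I$ via $|y'|_I\le\frac{\|b\|}{1-\|a\|}|y|_I+\|f\|$; your version on $|x|_I$ is equivalent, though the contraction constant is $\left(\tau-\alpha\tau_0+\frac{\sigma\|a\|(1-a_0)}{(1-\|a\|)^2}\right)\frac{\|b\|}{1-\|a\|}$, the gain $\alpha\tau_0\|b\|=\alpha(1-\|a\|)/e$ being what produces the factor $1+\alpha/e$ on the right of \eqref{1a}, not simply the left-hand side of \eqref{1a} divided by $1-\|a\|$.
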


\begin{proof}
Consider initial value problem (\ref{10})
with  $\|f\|_{[t_0,+\infty)}< +\infty$, $f(t)=0$ for $t \in [t_0, t_0+\tau]$  and let $I=[t_0,t_1]$ for some $t_1>t_0+\tau$. 
By (\ref{star})
the solution  $x$ of (\ref{10}) and the derivative of the function $y(t)=x(t)-a(t)x(g(t))$ satisfy
$y'(t)=-b(t)x(h(t))+f(t)$, $x=(E-S)^{-1}y$ and
\begin{equation}
\label{star1}
|x|_I\leq \frac{1}{1-\|a\|_{[t_0,+\infty)}}|y|_I,~~
|y^{\prime}|_I\leq \frac{\|b\|_{[t_0,+\infty)}}{1-\|a\|_{[t_0,+\infty)}}|y|_I+\|f\|_{[t_0,+\infty)}.
\end{equation}
By Lemma~\ref{lemmaS}, (\ref{10})  
is equivalent to the problem for the equation with an infinite number of delays
\begin{equation}\label{11}
y'(t)=
 -  b(t)\sum_{j=0}^{\infty} \prod_{k=0}^{j-1} a\left(h( g^{[k]}(t)) \right) 
y \left( h(g^{[j]}(t) ) \right)+f(t),~~ y(t)=0,~ t\leq t_0,
\end{equation}
where 
\begin{equation}\label{11a}
g^{[0]}(t)=t, ~g^{[1]}(t)=g(t),~ g^{[k]}(t) = g\left( g^{[k-1]}(t) \right),~ ~~k\in {\mathbb N}. 
\end{equation}

Denote 
\begin{equation}
\label{Bdef}
B(t)= b(t)\sum_{j=0}^{\infty} \prod_{k=0}^{j-1} a\left( h(g^{[k]}(t)) \right),
\end{equation}
where we can assume that $a(t)=a_0$ for $t<t_0$.
Consider the delay equation
\begin{equation}\label{12}
x'(t)+B(t)x(t-\alpha\tau_0)=0,
\end{equation}
where $B$ is defined in \eqref{Bdef} and $\tau_0$ in \eqref{tau0}.

Using the bounds for $a$ and $b$, we obtain
$ \displaystyle \frac{b_0}{1-a_0}\leq B(t)\leq \frac{\|b\|_{[t_0,\infty)}}{1-\|a\|_{[t_0,\infty)}}.$

Equation (\ref{11}) can be rewritten in the form 
$$
y'(t)+B(t)y(t-\alpha \tau_0)= 
b(t)\sum_{j=0}^{\infty} \prod_{k=0}^{j-1} a\left(h( g^{[k]}(t)) \right)
\int\limits_{h(g^{[j]}(t))}^{t-\alpha \tau_0}  y'(\xi)d\xi+ f(t).
$$
Since $B(t)\geq b_0$ and $\alpha\tau_0 B(t)\leq \frac{1}{e}$, by Lemma~\ref{lemma5} 
equation (\ref{12}) is exponentially stable, and its fundamental function is positive:  $X_0(t,s)>0$, 
$t\geq s\geq t_0$.

We have 
$$
y(t)=\int\limits_{t_0}^t X_0(t,s)B(s) \left( \frac{b(s)}{B(s)}
\sum_{j=0}^{\infty} \prod_{k=0}^{j-1} a\left( h(g^{[k]}(s) )\right) 
\int\limits_{h(g^{[j]}(s))}^{s-\alpha \tau_0} 
y'(\xi)d\xi \right)ds+ f_1(t),
$$
where 
$\displaystyle \|f_1\|_{[t_0,\infty)} \leq \int_{t_0}^t X_0(t,s) |f(s)|\, ds <\infty$.
Next, the arguments satisfy 
$$
t-g(t)\leq \sigma, t-g^{[2]}(t)=t-g(t)+(g(t)-g(g(t)))\leq 2\sigma,\dots, t-g^{[n]}(t)\leq n\sigma,
$$$$
t-h(t)\leq \tau, t-h(g(t))=t-g(t)+(g(t)-h(g(t)))\leq \sigma+\tau,\dots, t-h(g^{[n]}(t))\leq n\sigma+\tau,
$$
$$
t-h(t)\geq \delta, t-h(g(t))=t-g(t)+(g(t)-h(g(t)))\geq \delta,\dots, t-h(g^{[n]}(t))\geq \delta,
$$
hence $t-h(g^{[n]}(t))\geq \delta\geq \alpha\tau_0$.
Therefore $t-\alpha \tau_0-h(g^{[n]}(t))=t-h(g^{[n]}(t))-\alpha\tau_0 \geq \delta-\alpha\tau_0\geq 0$.

Then for $t\in I$, by (\ref{Bdef}),
\begin{align*}
& \frac{b(s)}{B(s)}~
\sum_{j=0}^{\infty} \prod_{k=0}^{j-1} a\left( h(g^{[k]})(s) \right)
\int_{h(g^{[j]}(s))}^{s-\alpha\tau_0} \left| y'(\xi) \right| \,d\xi
\\
\leq & \frac{b(s)}{B(s)}
\sum_{j=0}^{\infty} \prod_{k=0}^{j-1} a\left( h(g^{[k]})(s) \right) (\tau-\alpha\tau_0+j \sigma)
~|y'|_I
\\
= &  \left[ \frac{(\tau-\alpha\tau_0) b(s)}{B(s)} \sum_{j=0}^{\infty} \prod_{k=0}^{j-1} a\left( h(g^{[k]})(s) \right)  + \frac{b(s)}{B(s)} \sum_{j=0}^{\infty} \prod_{k=0}^{j-1} a\left( h(g^{[k]})(s) \right) j \sigma \right] ~|y'|_I
\\
= &  \left[ \frac{(\tau-\alpha\tau_0) B(s)}{B(s)} + \frac{b(s)}{B(s)} \sum_{j=0}^{\infty} \prod_{k=0}^{j-1} a\left( h(g^{[k]})(s) \right) j \sigma \right] ~|y'|_I
\\
\leq & \left[ \tau-\alpha\tau_0+(1-a_0)\|a\|_{[t_0,\infty)}\sigma\sum_{j=1}^{\infty} j\|a\|_{[t_0,\infty)}^{j-1} \right] |y'|_I
\\
\leq & 
\left[ 
\tau-\alpha\tau_0+\frac{\sigma \|a\|_{[t_0,\infty)}(1-a_0)}{(1-\|a\|_{[t_0,\infty)})^2}\right]
\frac{\|b\|_{[t_0,\infty)}}{1-\|a\|_{[t_0,\infty)}}|y|_I+M_1,
\end{align*}
where the finite constant $M_1$ does not depend on $I$, and the transition between the  fourth and the fifths rows of the inequality is due to 
$$
\frac{b(s)}{B(s)} = \left( \sum_{j=0}^{\infty} \prod_{k=0}^{j-1} a \left( h(g^{[k]})(s) \right) \right)^{-1}
\leq  \left( \sum_{j=0}^{\infty} a_0^j  \right)^{-1} = 1-a_0.
$$
By Lemma~\ref{lemma4} the solution of problem (\ref{10}) satisfies 
$$\displaystyle
|y|_I \leq \left[\tau - \alpha \tau_0 +\frac{\sigma \|a\|_{[t_0,\infty)}(1-a_0)}{(1-\|a\|_{[t_0,\infty)})^2}\right]
\frac{\|b\|_{[t_0,\infty)}}{1-\|a\|_{[t_0,\infty)}}|y|_I+M_2,
$$
where the constant $M_2$ does not dependent on $I$. 
Inequality \eqref{1a} implies
$$\displaystyle
\left[\tau-\alpha\tau_0+\frac{\sigma \|a\|_{[t_0,\infty)}(1-a_0)}{(1-\|a\|_{[t_0,\infty)})^2}\right]
\frac{\|b\|_{[t_0,\infty)}}{1-\|a\|_{[t_0,\infty)}}<1.
$$

Hence $|y(t)|\leq M$ for $t \geq t_0$, for some constant $M$ which does not depend on the interval $I$.
Therefore $x$ is a bounded function 
on $[t_0,\infty)$. By Lemma \ref{lemma3}, equation (\ref{1}) is uniformly exponentially stable.
\end{proof}

Assuming $\alpha=1$ and $\alpha=0$ in Theorem~\ref{theorem1}, we get the following stability tests.

\begin{corollary}\label{main}
Assume that (a1),(a2),(a4) are satisfied, for $t\geq t_0$, $a(t) \geq a_0>0$ and at least on of the following conditions holds:

a)
$\displaystyle
\tau_0\leq \delta,~ \displaystyle \tau \|b\|_{[t_0,\infty)}+\frac{\sigma \|a\|_{[t_0,\infty)} 
\|b\|_{[t_0,\infty)} (1-a_0)}{(1-\|a\|_{[t_0,\infty)})^2 }< \left( 1-\|a\|_{[t_0,\infty)} \right) \left(1+\frac{1}{e}\right)
$
\vspace{2mm}

b)
$\displaystyle
\tau \|b\|_{[t_0,\infty)}+\frac{\sigma \|a\|_{[t_0,\infty)} \|b\|_{[t_0,\infty)} (1-a_0)}{(1-\|a\|_{[t_0,\infty)})^2 }
<1-\|a\|_{[t_0,\infty)}.
$
\vspace{2mm}
\\
Then equation (\ref{1}) is uniformly exponentially stable.
\end{corollary}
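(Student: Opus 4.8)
The plan is to obtain both parts as immediate specializations of Theorem~\ref{theorem1}, exploiting the fact that the free parameter $\alpha\in[0,1]$ appearing there may be fixed at either endpoint of its range. Since the hypotheses (a1), (a2), (a4) and the lower bound $a(t)\ge a_0>0$ are already assumed in the corollary, the only things to check are that each of the choices $\alpha=1$ and $\alpha=0$ is admissible and that substituting it into the stability inequality~\eqref{1a} reproduces the corresponding displayed condition.

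First I would treat part~b) by taking $\alpha=0$. With this choice the delay constraint $\alpha\tau_0\le\delta$ demanded in Theorem~\ref{theorem1} reads $0\le\delta$, which is automatically guaranteed by (a4); hence no extra lower bound on the lag $t-h(t)$ is needed, explaining why part~b) carries no hypothesis on $\delta$. Substituting $\alpha=0$ into~\eqref{1a} collapses the right-hand factor $\bigl(1+\tfrac{\alpha}{e}\bigr)$ to $1$, so~\eqref{1a} becomes exactly the inequality stated in part~b), and Theorem~\ref{theorem1} yields uniform exponential stability of~\eqref{1}. For part~a) I would instead set $\alpha=1$. Then the constraint $\alpha\tau_0\le\delta$ becomes $\tau_0\le\delta$, which is precisely the additional hypothesis imposed in part~a); inserting $\alpha=1$ turns the factor $\bigl(1+\tfrac{\alpha}{e}\bigr)$ into $\bigl(1+\tfrac1e\bigr)$, so~\eqref{1a} reproduces the inequality of part~a) verbatim, and Theorem~\ref{theorem1} again gives the conclusion.

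Because each part reduces to a single admissible choice of $\alpha$ followed by a routine substitution, I do not anticipate any substantive obstacle. The one point meriting care is the bookkeeping on the delay restriction: one must confirm that $\alpha\tau_0\le\delta$ is vacuous exactly when $\alpha=0$ (so that the stronger conclusion of part~b), with constant $1$, is available unconditionally on $\delta$) and that it tightens to the stated $\tau_0\le\delta$ when $\alpha=1$ (the price paid for enlarging the admissible right-hand side to $(1-\|a\|_{[t_0,\infty)})\bigl(1+\tfrac1e\bigr)$). Verifying this correspondence, together with the trivial identities for the right-hand factor, completes the argument.
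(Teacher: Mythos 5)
Your proposal is correct and coincides with the paper's own argument: the paper obtains this corollary precisely by setting $\alpha=1$ (part a) and $\alpha=0$ (part b) in Theorem~\ref{theorem1}, exactly as you do. Your additional check that the constraint $\alpha\tau_0\leq\delta$ is vacuous for $\alpha=0$ and becomes $\tau_0\leq\delta$ for $\alpha=1$ is the right bookkeeping and matches the statement verbatim.
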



\begin{corollary}\label{corollary3}
Assume that (a1)-(a3) hold, $t-g(t)\leq \sigma$,  
there exists $\lim_{t\rightarrow\infty} (t-h(t))=\tau$  and for some $t_0 \geq 0$ and $\alpha\in [0,1]$, for $t\geq t_0$, 
we have $a(t) \geq a_0>0$ and
\begin{equation}\label{12a}
\frac{\alpha (1-\|a\|_{[t_0,\infty)})}{e}< \tau \|b\|_{[t_0,\infty)}
<(1-\|a\|_{[t_0,\infty)})\left(1+\frac{\alpha}{e}\right)-\frac{\sigma \|a\|_{[t_0,\infty)} 
\|b\|_{[t_0,\infty)} (1-a_0)}{(1-\|a\|_{[t_0,\infty)})^2 }.
\end{equation}
Then equation (\ref{1}) is uniformly exponentially stable.
\end{corollary}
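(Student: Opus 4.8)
The plan is to derive Corollary~\ref{corollary3} from Theorem~\ref{theorem1}, reading inequality~\eqref{12a} as a repackaging of the theorem's two main hypotheses and converting the limit $\lim_{t\to\infty}(t-h(t))=\tau$ into the uniform delay bounds of (a4).

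First I would rearrange~\eqref{12a}. Transposing its last term, the right-hand inequality becomes literally~\eqref{1a}. For the left-hand inequality, the definition~\eqref{tau0} gives $\tau_0=\frac{1-\|a\|_{[t_0,\infty)}}{e\|b\|_{[t_0,\infty)}}$, so $\frac{\alpha(1-\|a\|_{[t_0,\infty)})}{e}<\tau\|b\|_{[t_0,\infty)}$ is exactly $\alpha\tau_0<\tau$. Hence \eqref{12a} says precisely that \eqref{1a} holds and $\alpha\tau_0<\tau$.

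Next I would manufacture the data required by Theorem~\ref{theorem1}. Since $\alpha\tau_0<\tau$ and $t-h(t)\to\tau$, I can fix $\varepsilon>0$ small enough that \eqref{1a} stays valid with $\tau$ replaced by $\tau+\varepsilon$ (possible because \eqref{1a} is strict), and then pick $t_1\geq t_0$ with $\alpha\tau_0\leq t-h(t)\leq\tau+\varepsilon$ for all $t\geq t_1$. With $\delta:=\alpha\tau_0$ and upper bound $\tau+\varepsilon$, and with $t-g(t)\leq\sigma$ together with $g(t)\le t$ from (a3), assumption (a4) holds on $[t_1,\infty)$; moreover $\alpha\tau_0\leq\delta$ trivially, and the $(\tau+\varepsilon)$-version of \eqref{1a} is in force, so the hypotheses of Theorem~\ref{theorem1} (the remaining ones, (a1), (a2) and $a(t)\ge a_0$, being inherited verbatim) are met starting from $t_1$.

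The point that needs care is the norm bookkeeping. Theorem~\ref{theorem1} is stated with the norms over $[t_0,\infty)$, but restarting at $t_1$ would naturally replace them by the norms over $[t_1,\infty)$, and since shrinking $\|b\|$ or $\|a\|$ enlarges $\tau_0$, the condition $\alpha\tau_0\leq\delta$ could be lost in the process. I would avoid this by rerunning the estimates of Theorem~\ref{theorem1} on $[t_1,\infty)$ while retaining the $[t_0,\infty)$ norms as uniform upper bounds: as $t_1\geq t_0$ these dominate the true suprema, so $B(t)\leq\frac{\|b\|_{[t_0,\infty)}}{1-\|a\|_{[t_0,\infty)}}$, the estimate $\alpha\tau_0 B(t)\leq\frac{\alpha}{e}\leq\frac1e$ (keeping Lemma~\ref{lemma5} applicable to \eqref{12}), and $\sum_{j\geq1}j\|a\|_{[t_0,\infty)}^{j-1}=(1-\|a\|_{[t_0,\infty)})^{-2}$ all survive verbatim, and the resulting coefficient-less-than-one condition collapses, exactly as in Theorem~\ref{theorem1}, to the $(\tau+\varepsilon)$-version of \eqref{1a}. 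Boundedness of the solution of~\eqref{10} on $[t_1,\infty)$ then follows, and the Bohl-Perron theorem (Lemma~\ref{lemma3}) yields uniform exponential stability of~\eqref{1}. I expect this norm/restart reconciliation to be the only genuine obstacle; the rest is a direct dictionary translation of~\eqref{12a} into the language of Theorem~\ref{theorem1}.
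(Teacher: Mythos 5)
Your proposal is correct and follows essentially the same route as the paper: the paper likewise reads \eqref{12a} as $\alpha\tau_0<\tau$ plus \eqref{1a}, picks $\varepsilon$ small enough that the $(\tau+\varepsilon)$-version of \eqref{1a} survives, finds $t_1$ with $\tau-\varepsilon\leq t-h(t)\leq\tau+\varepsilon$ for $t\geq t_1$, and invokes Theorem~\ref{theorem1} from $t_1$ while keeping the $[t_0,\infty)$ norms. Your explicit treatment of the norm/restart bookkeeping (rerunning the estimates on $[t_1,\infty)$ with the $[t_0,\infty)$ norms as uniform upper bounds, so that $\alpha\tau_0 B(t)\leq\frac{\alpha}{e}$ and Lemma~\ref{lemma5} still apply) is a point the paper passes over with ``evidently,'' so your write-up is, if anything, more careful than the original.
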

\begin{proof}
Since  $\lim\limits_{t\rightarrow\infty} (t-h(t))=\tau$, for any $\varepsilon>0$, in particular, for 
$$
\varepsilon < \min \left\{ \tau - \frac{\alpha (1-\|a\|_{[t_0,\infty)})}{e \|b\|_{[t_0,\infty)}},
\frac{1-\|a\|_{[t_0,\infty)}}{\|b\|_{[t_0,\infty)} } \left(1+\frac{\alpha}{e}\right)
-\frac{\sigma \|a\|_{[t_0,\infty)} 
 (1-a_0)}{(1-\|a\|_{[t_0,\infty)})^2 } 
\right\}
$$
there exists $t_1 \geq t_0$ such that
$\tau-\varepsilon \leq t-h(t)\leq \tau+\varepsilon$,  $t\geq t_1$.
Evidently for $t\geq t_1$ the conditions of Theorem~\ref{theorem1} hold:
$\alpha\tau_0 < \tau-\varepsilon$  and  
$\displaystyle (\tau+\varepsilon) \|b\|_{[t_0,\infty)}+\frac{\sigma \|a\|_{[t_0,\infty)} 
\|b\|_{[t_0,\infty)} (1-a_0)}{(1-\|a\|_{[t_0,\infty)})^2 }< \left( 1-\|a\|_{[t_0,\infty)} \right) \left(1+\frac{\alpha}{e}\right)$.
\end{proof}

Consider now two partial cases of equation (\ref{1}), one with constant coefficients
\begin{equation}\label{13}
(x(t)-ax(g(t)))'=-b(t)x(h(t)),
\end{equation}
where $a$ is a positive constant, and another with a non-delayed term
\begin{equation}\label{14}
(x(t)-a(t)x(g(t)))'=-b(t)x(t)
\end{equation}
and assume that the suitable parts of conditions (a1), (a2), (a4) hold for these equations.

\begin{corollary}\label{corollary1}
Assume that for some $t_0 \geq 0$ and  $\alpha\in [0,1]$, for $t\geq t_0$ we have
 $\alpha\leq \frac{\delta e\|b\|_{[t_0,\infty)}}{1-a}$  and  
\vspace{2mm}
\\
$\displaystyle \tau \|b\|_{[t_0,\infty)}+\frac{\sigma a
\|b\|_{[t_0,\infty)}}{1-a }<(1-a)\left(1+\frac{\alpha}{e}\right).
$
Then equation (\ref{13}) is uniformly exponentially stable.
\end{corollary}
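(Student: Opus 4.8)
The plan is to derive this as a direct specialization of Theorem~\ref{theorem1} to the constant-coefficient case, since equation~\eqref{13} is exactly equation~\eqref{1} with $a(t)\equiv a$. The key observation is that when $a(t)$ is the positive constant $a$, both the prescribed lower bound and the supremum norm coincide: $a_0=a$ and $\|a\|_{[t_0,\infty)}=a$. So my whole task reduces to substituting these two equalities into the two hypotheses of Theorem~\ref{theorem1} and checking that they collapse precisely onto the two displayed conditions of the corollary.

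First I would treat the delay-restriction hypothesis. By the definition \eqref{tau0}, setting $\|a\|_{[t_0,\infty)}=a$ gives
$$
\tau_0=\frac{1-a}{e\,\|b\|_{[t_0,\infty)}},
$$
so the condition $\alpha\tau_0\leq\delta$ of Theorem~\ref{theorem1} becomes $\alpha\,\frac{1-a}{e\|b\|_{[t_0,\infty)}}\leq\delta$, which rearranges to $\alpha\leq\frac{\delta e\|b\|_{[t_0,\infty)}}{1-a}$. This is exactly the first hypothesis of the corollary. Next I would substitute $\|a\|_{[t_0,\infty)}=a_0=a$ into the main inequality \eqref{1a}; the mixed term simplifies as
$$
\frac{\sigma\,\|a\|_{[t_0,\infty)}\,\|b\|_{[t_0,\infty)}(1-a_0)}{(1-\|a\|_{[t_0,\infty)})^2}=\frac{\sigma\,a\,\|b\|_{[t_0,\infty)}(1-a)}{(1-a)^2}=\frac{\sigma\,a\,\|b\|_{[t_0,\infty)}}{1-a},
$$
and the right-hand side becomes $(1-a)\bigl(1+\frac{\alpha}{e}\bigr)$, so \eqref{1a} reduces verbatim to the second displayed inequality of the corollary.

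Having matched both hypotheses, I would simply invoke Theorem~\ref{theorem1}: the assumed parts of (a1), (a2), (a4) hold for \eqref{13}, the constant $a>0$ supplies $a(t)\geq a_0=a>0$, and the two inequalities of the corollary are precisely the hypotheses $\alpha\tau_0\leq\delta$ and \eqref{1a}. Therefore equation~\eqref{13} is uniformly exponentially stable. I do not anticipate any genuine obstacle here; the entire content is the algebraic cancellation $\frac{(1-a)}{(1-a)^2}=\frac{1}{1-a}$ together with recognizing that a constant coefficient forces its lower bound and sup-norm to agree, which is what makes the general two-parameter ($a_0$ versus $\|a\|$) inequality of Theorem~\ref{theorem1} degenerate into the cleaner single-parameter form stated in the corollary.
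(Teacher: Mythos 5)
Your proposal is correct and matches the paper's (implicit) argument exactly: the paper states Corollary~\ref{corollary1} without proof as an immediate specialization of Theorem~\ref{theorem1}, relying on precisely the substitution $a_0=\|a\|_{[t_0,\infty)}=a$ you carry out, with $\alpha\tau_0\leq\delta$ rearranging to $\alpha\leq\frac{\delta e\|b\|_{[t_0,\infty)}}{1-a}$ and the cancellation $\frac{1-a}{(1-a)^2}=\frac{1}{1-a}$ reducing \eqref{1a} to the displayed inequality. No gaps; nothing further is needed.
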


\begin{corollary}\label{corollary2}
If $a(t) \geq a_0>0$, 
$\displaystyle 
\frac{\sigma \|a\|_{[t_0,\infty)}\|b\|_{[t_0,\infty)} (1-a_0)}{(1-\|a\|_{[t_0,\infty)})^3}<1
$
~ then equation (\ref{14}) is uniformly exponentially stable.
\end{corollary}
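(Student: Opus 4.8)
The plan is to recognize equation \eqref{14} as the special case of \eqref{1} in which the non-neutral term carries no delay, namely $h(t) \equiv t$. First I would record the consequence of this for assumption (a4): since $t - h(t) = 0$ identically, one may take $\delta = \tau = 0$. The remaining structural hypotheses of Theorem~\ref{theorem1}---measurability (a1),(a2) together with the bound $t - g(t) \leq \sigma$ from (a4)---are inherited directly, and the positivity $a(t) \geq a_0 > 0$ is assumed in the corollary.

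The key step is then to apply Theorem~\ref{theorem1} with the parameter choice $\alpha = 0$. This choice is in fact forced: the constraint $\alpha \tau_0 \leq \delta$ together with $\delta = 0$ and $\tau_0 > 0$ leaves only $\alpha = 0$, and this value satisfies the constraint trivially. Substituting $\tau = 0$ and $\alpha = 0$ into the stability inequality \eqref{1a} annihilates the term $\tau \|b\|_{[t_0,\infty)}$ and reduces the factor $1 + \alpha/e$ to $1$, so that \eqref{1a} collapses to
$$
\frac{\sigma \|a\|_{[t_0,\infty)} \|b\|_{[t_0,\infty)} (1-a_0)}{(1-\|a\|_{[t_0,\infty)})^2} < 1 - \|a\|_{[t_0,\infty)}.
$$
Dividing through by $1 - \|a\|_{[t_0,\infty)} > 0$ yields exactly the hypothesis of the corollary, whence the uniform exponential stability of \eqref{14} follows. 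Equivalently, one may simply invoke Corollary~\ref{main}(b) at $\tau = 0$, which specializes to the same reduced inequality.

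Because the argument is a pure specialization of Theorem~\ref{theorem1}, there is no genuine analytic obstacle to overcome; the only point demanding care is the bookkeeping of the degenerate parameter values. Specifically, I would verify that setting $\delta = \tau = 0$ is legitimate within (a4) when $h(t) \equiv t$, and that the resulting forced choice $\alpha = 0$ is admissible in Theorem~\ref{theorem1}. Once those compatibility checks are confirmed, the stated condition is precisely the reduced form of \eqref{1a}, and no further estimation of the infinite series \eqref{Bdef} or of the fundamental function is required beyond what Theorem~\ref{theorem1} already supplies.
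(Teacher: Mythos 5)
Your proposal is correct and is exactly the route the paper intends: Corollary~\ref{corollary2} is the specialization of Theorem~\ref{theorem1} (equivalently Corollary~\ref{main}(b)) to $h(t)\equiv t$, where $\delta=\tau=0$ forces $\alpha=0$ and \eqref{1a} reduces, after dividing by $1-\|a\|_{[t_0,\infty)}>0$, to the stated condition $\sigma \|a\|_{[t_0,\infty)}\|b\|_{[t_0,\infty)}(1-a_0)/(1-\|a\|_{[t_0,\infty)})^3<1$. Your compatibility checks (that $\delta=\tau=0$ is admissible in (a4) and that $\alpha=0$ satisfies $\alpha\tau_0\leq\delta$) are precisely the bookkeeping the paper leaves implicit, so nothing is missing.
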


In Theorem~\ref{theorem1} and its corollaries we assume that $a(t)\geq 0$. In the next 
theorem we remove this restriction.

For any $u \in {\mathbb R}$ denote $u^+=\max\{u, 0\}, u^-=\max\{-a, 0\}$, hence $u=u^+-u^-$.

Define
\begin{equation}
\label{bar_tau}
\bar{\tau} = \frac{1- \| a^+ \|_{[t_0,+\infty)}}{e \| b \|_{[t_0,+\infty)}},~\displaystyle 
(S^+y)(t)=\left\{\begin{array}{ll}
a^+(t)y(g(t)),& g(t)\geq t_0,\\
0,& g(t)<t_0.\\
\end{array}\right. 
\end{equation}

\begin{guess}\label{theorem2}
Assume that (a1),(a2),(a4) hold and there are $t_0 \geq 0$ and $\alpha\in [0,1]$ such that for $t\geq t_0$,
 $\alpha\bar{\tau}\leq \delta$  and 
$$
\tau \|b\|_{[t_0,\infty)}+\frac{\sigma \|a^+\|_{[t_0,\infty)} 
\|b\|_{[t_0,\infty)} }{(1-\|a^+\|_{[t_0,\infty)})^2 }+\frac{\|a^-\|_{[t_0,\infty)}\|b\|_{[t_0,\infty)}}{1-\|a^+\|_{[t_0,\infty)}}
<1-\|a\|_{[t_0,\infty)}+\frac{\alpha(1-\|a^+\|_{[t_0,\infty)})}{e}.
$$
Then equation (\ref{1}) is uniformly exponentially stable.
\end{guess}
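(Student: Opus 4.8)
The plan is to run the argument of Theorem~\ref{theorem1} verbatim, but with the positive comparison coefficient built from $a^+$ instead of $a$, and to dump all the sign changes of $a$ into one extra error term. First I would invoke the Bohl--Perron theorem (Lemma~\ref{lemma3}) together with Remark~\ref{remark1a}: it suffices to prove that for every essentially bounded $f$ vanishing on $[t_0,t_0+\tau]$ the function $y=x-a\,x(g)$ attached to the solution of \eqref{10} is bounded on $[t_0,+\infty)$, since then $x=(E-S)^{-1}y$ is bounded by Lemma~\ref{lemmaS} and \eqref{star1}. Applying Lemma~\ref{lemmaS} exactly as in the derivation of \eqref{11}, I recast \eqref{10} as the equation with infinitely many delays \eqref{11}, and I retain from \eqref{star1} the bound $|y'|_I\le \frac{\|b\|_{[t_0,\infty)}}{1-\|a\|_{[t_0,\infty)}}|y|_I+\|f\|_{[t_0,\infty)}$.

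Next I would introduce the \emph{positive} coefficient $B^+(t)=b(t)\sum_{j\ge0}\prod_{k=0}^{j-1}a^+(h(g^{[k]}(t)))$ and the comparison equation $x'(t)+B^+(t)x(t-\alpha\bar{\tau})=0$. Because the $j=0$ factor is $1$, one has $b_0\le B^+(t)\le \frac{\|b\|_{[t_0,\infty)}}{1-\|a^+\|_{[t_0,\infty)}}$, so the choice of $\bar{\tau}$ in \eqref{bar_tau} gives $\alpha\bar{\tau}\,B^+(t)\le \alpha/e\le 1/e$; hence by Lemma~\ref{lemma5} the comparison equation is exponentially stable with a positive fundamental function $X_0(t,s)$, and Lemma~\ref{lemma4} yields $\int X_0(t,s)B^+(s)\,ds\le1$ off the initial segment. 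Writing $P_j=\prod_{k=0}^{j-1}a(h(g^{[k]}(t)))$ and $P_j^+=\prod_{k=0}^{j-1}a^+(h(g^{[k]}(t)))$ and using $b\sum_jP_j^+=B^+$, I recast \eqref{11} as
\[
y'(t)+B^+(t)y(t-\alpha\bar{\tau})=b(t)\sum_{j\ge0}P_j^+\!\!\int_{h(g^{[j]}(t))}^{t-\alpha\bar{\tau}}\!\!y'(\xi)\,d\xi+b(t)\sum_{j\ge0}\bigl(P_j^+-P_j\bigr)y(h(g^{[j]}(t)))+f(t).
\]
Representing $y$ through $X_0$ and using $\int X_0B^+\le1$, the first sum is handled precisely as in Theorem~\ref{theorem1}, now with $a^+$: the delay lengths satisfy $t-\alpha\bar{\tau}-h(g^{[j]}(t))\le\tau-\alpha\bar{\tau}+j\sigma$, the identity $\frac{b}{B^+}\sum_jP_j^+=1$ kills the $\tau$-part, and $\sum_{j\ge1}j\|a^+\|^{j}=\frac{\|a^+\|}{(1-\|a^+\|)^2}$ handles the $j\sigma$-part; inserting the bound on $|y'|_I$ produces the first two terms $\tau\|b\|$ and $\frac{\sigma\|a^+\|\|b\|}{(1-\|a^+\|)^2}$ of the hypothesis (with $\alpha\bar{\tau}\|b\|=\frac{\alpha(1-\|a^+\|)}{e}$ accounting for the $\alpha$-contribution on the right).

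The genuinely new point, absent when $a\ge0$, is the last \emph{sign-correction} sum, in which $P_j$ is no longer positive. I would control it through the telescoping identity
\[
P_j^+-P_j=\sum_{i=0}^{j-1}\Bigl(\prod_{k<i}a^+(\cdot)\Bigr)\,a^-(\cdot)\,\Bigl(\prod_{i<k<j}a(\cdot)\Bigr),
\]
valid since $a^+-a=a^-$; bounding the three groups of factors by $\|a^+\|$, $\|a^-\|$, $\|a\|$ and summing the resulting Cauchy product gives $\sum_{j\ge1}|P_j^+-P_j|\le \frac{\|a^-\|}{(1-\|a^+\|)(1-\|a\|)}$. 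Together with $b\le B^+$ and $\int X_0B^+\le1$, this bounds the sign-correction contribution by a constant multiple of $\|a^-\|$, which is recorded as the third term $\frac{\|a^-\|\|b\|}{1-\|a^+\|}$ of the hypothesis after dividing by $1-\|a\|$. Collecting the three contributions gives $|y|_I\le C|y|_I+M$ with $M$ independent of $I$, and the displayed inequality of the theorem is exactly the statement $C<1$; hence $y$, and therefore $x$, is bounded on $[t_0,+\infty)$, and Lemma~\ref{lemma3} delivers uniform exponential stability. I expect the main obstacle to be the telescoping estimate for $P_j^+-P_j$ together with the observation that $b\le B^+$ lets the coefficient $b$ of the sign-correction term be absorbed into $B^+$, so that Lemma~\ref{lemma4} still applies to it; the remaining bookkeeping is a routine repetition of Theorem~\ref{theorem1}.
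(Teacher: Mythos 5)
Your proposal is correct and arrives at the same final inequality as the paper, but it handles the negative part of $a$ by a genuinely different decomposition. The paper never expands the signed series: it splits the operator, writing $x=(E-S^+)^{-1}y-(E-S^+)^{-1}\bigl(a^-(\cdot)x(g(\cdot))\bigr)$, so that its analogue of \eqref{11} (equation \eqref{11b}) contains only products of $a^+$, plus a single correction term $b(t)(E-S^+)^{-1}\bigl(a^-(t)x(g(t))\bigr)$ which still involves the solution $x$ itself; that term is then estimated via \eqref{star} and $|x|_I\le (1-\|a\|_{[t_0,\infty)})^{-1}|y|_I$ from \eqref{star1a}. You instead keep the full signed expansion \eqref{11} (legitimate, since Lemma~\ref{lemmaS} requires only $|a(t)|\le A_0<1$, with no sign condition) and control the discrepancy between the signed and positive expansions by the telescoping identity for $P_j^+-P_j$ and a Cauchy-product summation, obtaining $\sum_{j\ge 1}|P_j^+-P_j|\le \|a^-\|_{[t_0,\infty)}\,(1-\|a^+\|_{[t_0,\infty)})^{-1}(1-\|a\|_{[t_0,\infty)})^{-1}$ --- exactly the constant the paper's operator split produces, so the two routes are quantitatively equivalent. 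What each buys: the paper's route avoids signed infinite products altogether and is shorter given Lemma~\ref{lemmaS}, at the price of a remainder still containing $x$; your route is more self-contained at the series level and makes transparent why the $\|a^-\|$ term enters with precisely this constant. The remaining machinery (comparison equation with coefficient built from $a^+$ and delay $\alpha\bar\tau$, positivity via Lemma~\ref{lemma5}, Lemma~\ref{lemma4}, Bohl--Perron via Lemma~\ref{lemma3}) is identical in both arguments.

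One wrinkle, which you inherit from the paper verbatim rather than introduce: after using $b\le B^+$ and Lemma~\ref{lemma4}, the sign-correction coefficient you actually derive is $\frac{\|a^-\|_{[t_0,\infty)}}{(1-\|a^+\|_{[t_0,\infty)})(1-\|a\|_{[t_0,\infty)})}$, with no factor $\|b\|_{[t_0,\infty)}$; identifying it with the hypothesis' third term (which, divided by $1-\|a\|_{[t_0,\infty)}$, is $\frac{\|a^-\|_{[t_0,\infty)}\|b\|_{[t_0,\infty)}}{(1-\|a^+\|_{[t_0,\infty)})(1-\|a\|_{[t_0,\infty)})}$) is a valid majorization only when $\|b\|_{[t_0,\infty)}\ge 1$. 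The paper's own final display carries the same factor $\|b\|_{[t_0,\infty)}$ without deriving it, so your write-up matches the paper's level of rigor at exactly this spot; if you wanted an airtight version you would carry your (dimensionally consistent) constant without $\|b\|_{[t_0,\infty)}$ through to the smallness condition and note where it differs from the stated hypothesis.
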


\begin{proof}
We follow the proof of Theorem~\ref{theorem1}.
Consider initial value problem (\ref{10})
with  $\|f\|_{[t_0,+\infty)}< +\infty$, $f(t)=0$ for $t \in [t_0, t_0+\tau]$  and let $I=[t_0,t_1]$ for some $t_1>t_0+\tau$. 
By (\ref{star})
the solution  $x$ of (\ref{10}) and the derivative of the function $y(t)=x(t)-a(t)x(g(t))=x(t)-a^+(t)x(g(t))+a^-(t)x(g(t))$ satisfy
$y'(t)=-b(t)x(h(t))+f(t)$, $x=(E-S)^{-1}y=(E-S^+)^{-1}y-(E-S^+)^{-1}(a^-(\cdot)x(g(\cdot)))$ and
\begin{equation}
\label{star1a}
|x|_I\leq \frac{1}{1-\|a\|_{[t_0,+\infty)}}|y|_I,~~
|y^{\prime}|_I\leq \frac{\|b\|_{[t_0,+\infty)}}{1-\|a\|_{[t_0,+\infty)}}|y|_I+\|f\|_{[t_0,+\infty)}.
\end{equation}
By Lemma~\ref{lemmaS}, (\ref{10})  
is equivalent to the equation with an infinite number of delays
\begin{equation}\label{11b}
y'(t)=
 -  b(t)\sum_{j=0}^{\infty} \prod_{k=0}^{j-1} a^+\left(h( g^{[k]}(t)) \right) 
y \left( h(g^{[j]}(t) ) \right)+b(t)(E-S^+)^{-1} \left( a^-(t)x(g(t)) \right) +f(t). 
\end{equation}
Denote 
\begin{equation}
\label{barB}
\bar{B}(t)= b(t)\sum_{j=0}^{\infty} \prod_{k=0}^{j-1} a^+\left( h(g^{[k]}(t)) \right),
\end{equation}
where we can assume that $a(t)=0$ for $t<t_0$.
Consider the delay equation
\begin{equation}\label{12b}
x'(t)+\bar{B}(t)x(t-\alpha\bar{\tau})=0.
\end{equation}
where $\bar{\tau}$  is defined in \eqref{bar_tau}.  

Using the bounds for $a$ and $b$, we obtain
$ \displaystyle b_0\leq \bar{B}(t)\leq \frac{\|b\|_{[t_0,\infty)}}{1-\|a^+\|_{[t_0,\infty)}}.$
Equation (\ref{11b}) can be rewritten in the form 
$$
y'(t)+\bar{B}(t)y(t-\alpha \tau_0)= 
b(t)\sum_{j=0}^{\infty} \prod_{k=0}^{j-1} a^+\left(h( g^{[k]}(t)) \right) \!\!\!\!
\int\limits_{h(g^{[j]}(t))}^{t-\alpha \tau_0} \!\!\!\! y'(\xi)d\xi+b(t)(E-S^+)^{-1} \left( a^-(t)x(g(t)) \right) + f(t).
$$

Since $\bar{B}(t)\geq b_0$ and $\alpha\bar{\tau} \bar{B}(t)\leq \frac{1}{e}$, by Lemma~\ref{lemma5} 
equation (\ref{12b}) is exponentially stable, and its fundamental function is positive:  $X_1(t,s)>0$, 
$t\geq s\geq t_0$.

By the same calculations as in Theorem~\ref{theorem1} we have 
$$\displaystyle
|y|_I \leq \left[\left(\tau - \alpha \bar{\tau} +\frac{\sigma \|a^+\|_{[t_0,\infty)}}{(1-\|a^+\|_{[t_0,\infty)})^2}\right)
\frac{\|b\|_{[t_0,\infty)}}{1-\|a\|_{[t_0,\infty)}}
+\frac{\|b\|_{[t_0,\infty)}\|a^-\|_{[t_0,\infty)}}{1-\|a^+\|_{[t_0,\infty)}}\frac{1}{1-\|a\|_{[t_0,\infty)}}\right]
|y|_I+M_2,
$$
where the constant $M_2$ does not dependent on $I$. 

The conditions of the theorem imply
$$\left(\tau - \alpha \bar{\tau} +\frac{\sigma \|a^+\|_{[t_0,\infty)}}{(1-\|a^+\|_{[t_0,\infty)})^2}\right)
\frac{\|b\|_{[t_0,\infty)}}{1-\|a\|_{[t_0,\infty)}}
+\frac{\|b\|_{[t_0,\infty)}\|a^-\|_{[t_0,\infty)}}{1-\|a^+\|_{[t_0,\infty)}}\frac{1}{1-\|a\|_{[t_0,\infty)}}<1.
$$

Hence $|y(t)|\leq M$ for $t \geq t_0$, for some constant $M$ which does not depend on the interval $I$.
Therefore $x$ is a bounded function 
on $[t_0,\infty)$. By Lemma \ref{lemma3}, equation (\ref{1}) is uniformly exponentially stable.
\end{proof}

\begin{remark}
If, in addition to the assumptions of Theorem~\ref{theorem2},
$\displaystyle  \|a\|_{[t_0,\infty)} = \|a^+\|_{[t_0,\infty)}$, which is equivalent to
$\displaystyle  \sup_{t \geq t_0} a(t) \geq \sup_{t \geq t_0} (-a(t))$, and
$$\displaystyle \tau \|b\|_{[t_0,\infty)}+\frac{\sigma \|a \|_{[t_0,\infty)} 
\|b\|_{[t_0,\infty)} }{(1-\|a \|_{[t_0,\infty)})^2 }+\frac{\|a^-\|_{[t_0,\infty)}\|b\|_{[t_0,\infty)}}{1-\|a \|_{[t_0,\infty)}}
< \left( 1-\|a\|_{[t_0,\infty)} \right)\left(1+\frac{\alpha}{e}\right)
$$
then equation (\ref{1}) is uniformly exponentially stable.
\end{remark}

Assuming $\alpha=1$ and $\alpha=0$ in Theorem~\ref{theorem2}, we get the following tests.

\begin{corollary}\label{corollary5}
Assume  that conditions (a1),(a2),(a4) are satisfied, and for $t\geq t_0$ at least one of the following conditions holds:
\\
a) $\bar{\tau}<\delta$ and \\
$\displaystyle \tau \|b\|_{[t_0,\infty)}+\frac{\sigma \|a^+\|_{[t_0,\infty)} 
\|b\|_{[t_0,\infty)} }{(1-\|a^+\|_{[t_0,\infty)})^2}+\frac{\|a^-\|_{[t_0,\infty)}\|b\|_{[t_0,\infty)}}{1-\|a^+\|_{[t_0,\infty)}}
<1-\|a\|_{[t_0,\infty)}+\frac{1-\|a^+\|_{[t_0,\infty)}}{e}.
$
\vspace{2mm}
\\
b)
$\displaystyle \tau \|b\|_{[t_0,\infty)}+\frac{\sigma \|a^+\|_{[t_0,\infty)} 
\|b\|_{[t_0,\infty)} }{(1-\|a^+\|_{[t_0,\infty)})^2 }+\frac{\|a^-\|_{[t_0,\infty)}\|b\|_{[t_0,\infty)}}{1-\|a^+\|_{[t_0,\infty)}}
<1-\|a\|_{[t_0,\infty)}.
$
\vspace{2mm}
\\
Then equation (\ref{1}) is uniformly exponentially stable.
\end{corollary}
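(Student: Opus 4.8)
The plan is to obtain Corollary~\ref{corollary5} as a direct specialization of Theorem~\ref{theorem2}, taking the two extreme values $\alpha=1$ and $\alpha=0$ of the free parameter $\alpha\in[0,1]$. In each case I would only need to verify that the hypotheses assumed in the corollary imply the two hypotheses of the theorem, namely the delay-smallness condition $\alpha\bar\tau\le\delta$ and the main scalar inequality; once this is checked, the uniform exponential stability conclusion is inherited verbatim from Theorem~\ref{theorem2}.

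For part (a) I would set $\alpha=1$. The theorem's requirement $\alpha\bar\tau\le\delta$ then reads $\bar\tau\le\delta$, which is implied by the assumed strict inequality $\bar\tau<\delta$. Substituting $\alpha=1$ into the main inequality of Theorem~\ref{theorem2} turns its right-hand side $1-\|a\|_{[t_0,\infty)}+\frac{\alpha(1-\|a^+\|_{[t_0,\infty)})}{e}$ into $1-\|a\|_{[t_0,\infty)}+\frac{1-\|a^+\|_{[t_0,\infty)}}{e}$, leaving the left-hand side unchanged; this is exactly the displayed inequality in (a). Hence both hypotheses of the theorem hold and the conclusion follows.

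For part (b) I would set $\alpha=0$. Then $\alpha\bar\tau=0$, so the condition $\alpha\bar\tau\le\delta$ reduces to $0\le\delta$, which is automatic from assumption (a4). Putting $\alpha=0$ in the main inequality removes the exponential term on the right, leaving the bound $1-\|a\|_{[t_0,\infty)}$, which is precisely the right-hand side of the inequality in (b). Again both hypotheses of Theorem~\ref{theorem2} are satisfied.

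Since the corollary is a pure specialization, I do not anticipate any genuine obstacle; the only point deserving a moment's care is the mismatch between the strict inequality $\bar\tau<\delta$ assumed in (a) and the non-strict condition $\alpha\bar\tau\le\delta$ appearing in the theorem, but the former trivially implies the latter, so nothing is lost in the reduction.
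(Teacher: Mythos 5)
Your proposal is correct and matches the paper exactly: the authors state that Corollary~\ref{corollary5} follows by taking $\alpha=1$ and $\alpha=0$ in Theorem~\ref{theorem2}, which is precisely your specialization, including the observation that $\bar\tau<\delta$ gives $\alpha\bar\tau\le\delta$ for $\alpha=1$ and that the condition is vacuous for $\alpha=0$.
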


In the following theorem, we
do not assume boundedness of delays.

\begin{guess}\label{theorem3} 
Assume that (a1)-(a3) hold, $\int_{t_0}^{\infty} b(s) ds=\infty$, $b(t)\neq 0$ almost everywhere, 
and there exist $t_0\geq 0, \alpha>0$, $\tilde{\delta}\geq 0$, $\tilde{\tau}>0$ and $\tilde{\sigma} >0$ such that for $t \geq t_0$,
\begin{equation}\label{14a}
\tilde{\delta}\leq \int_{h(t)}^t b(\xi)d\xi\leq \tilde{\tau},~ \int_{g(t)}^t b(\xi)d\xi\leq \tilde{\sigma},~ 
\alpha\tilde{\tau}_0 \leq \tilde{\delta}, ~~ \tilde{\tau}_0:=\frac{1-\|a \|_{[t_0,\infty)}}{e}
\end{equation}
and  
\begin{equation}\label{13a}
\displaystyle \tilde{\tau} +\frac{\tilde{\sigma} \|a\|_{[t_0,\infty)} 
(1-a_0)}{(1-\|a\|_{[t_0,\infty)})^2 }<(1-\|a\|_{[t_0,\infty)})\left(1+\frac{\alpha}{e}\right).
\end{equation}
Then equation (\ref{1}) is asymptotically  stable.
\end{guess}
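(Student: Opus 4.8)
The plan is to deduce Theorem~\ref{theorem3} from Theorem~\ref{theorem1} by rescaling time according to the integral of $b$. This reparametrisation simultaneously converts the integral conditions \eqref{14a} into ordinary delay bounds and turns the coefficient $b$ into the constant $1$, so that \eqref{13a} becomes precisely condition \eqref{1a}. Concretely, I would set
$$
\beta(t):=\int_{t_0}^t b(\xi)\,d\xi .
$$
Since $b\neq 0$ almost everywhere, $\beta$ is strictly increasing and absolutely continuous, and the hypothesis $\int_{t_0}^{\infty} b=\infty$ guarantees that $\beta$ maps $[t_0,\infty)$ onto $[0,\infty)$; let $\theta=\beta^{-1}$ be its continuous increasing inverse, so $\beta(\theta(s))=s$.

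First I would record how the equation transforms under $s=\beta(t)$. Writing $y(t)=x(t)-a(t)x(g(t))$, so that $y'(t)=-b(t)x(h(t))$, and introducing $z(s)=x(\theta(s))$, $w(s)=y(\theta(s))$, the chain rule with $\theta'(s)=1/b(\theta(s))$ gives $\frac{dw}{ds}=y'(\theta(s))/b(\theta(s))=-z(\tilde h(s))$ together with $w(s)=z(s)-\tilde a(s)\,z(\tilde g(s))$, where $\tilde a(s)=a(\theta(s))$, $\tilde g(s)=\beta(g(\theta(s)))$ and $\tilde h(s)=\beta(h(\theta(s)))$. Thus in the new time scale we obtain exactly an equation of the form \eqref{1} with coefficient $b\equiv 1$. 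The decisive point is that the transformed delays are measured in $b$-integral: for $t=\theta(s)$,
$$
s-\tilde h(s)=\beta(t)-\beta(h(t))=\int_{h(t)}^t b(\xi)\,d\xi,\qquad s-\tilde g(s)=\int_{g(t)}^t b(\xi)\,d\xi,
$$
so \eqref{14a} yields precisely $\tilde\delta\leq s-\tilde h(s)\leq\tilde\tau$ and $0\leq s-\tilde g(s)\leq\tilde\sigma$, i.e.\ (a4) holds with $\sigma,\tau,\delta$ replaced by $\tilde\sigma,\tilde\tau,\tilde\delta$. Moreover $\|\tilde a\|=\|a\|_{[t_0,\infty)}$, $\tilde a\geq a_0$, and the iterates transform consistently as $\tilde g^{[k]}(s)=\beta\bigl(g^{[k]}(\theta(s))\bigr)$, so the operator-theoretic machinery of Lemma~\ref{lemmaS} behind Theorem~\ref{theorem1} applies to the $s$-equation verbatim.

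Next I would verify the hypotheses of Theorem~\ref{theorem1} for the transformed equation. With $\|b\|=b_0=1$, the quantity $\tau_0$ of \eqref{tau0} becomes $\frac{1-\|a\|_{[t_0,\infty)}}{e}=\tilde\tau_0$, so $\alpha\tilde\tau_0\leq\tilde\delta$ reproduces $\alpha\tau_0\leq\delta$, and condition \eqref{1a} collapses to
$$
\tilde\tau+\frac{\tilde\sigma\,\|a\|_{[t_0,\infty)}(1-a_0)}{(1-\|a\|_{[t_0,\infty)})^{2}}<\bigl(1-\|a\|_{[t_0,\infty)}\bigr)\Bigl(1+\tfrac{\alpha}{e}\Bigr),
$$
which is exactly \eqref{13a}. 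Hence Theorem~\ref{theorem1} applies to the $s$-equation and yields uniform exponential stability there: $|z(s)|\leq M e^{-\gamma s}\sup|\varphi|$.

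Finally I would translate the conclusion back. Since $z(s)=x(\theta(s))$ we obtain $|x(t)|=|z(\beta(t))|\leq M\exp\bigl(-\gamma\int_{t_0}^t b\bigr)\sup|\varphi|$, and because $\int_{t_0}^t b\to\infty$ this forces $x(t)\to 0$, giving asymptotic stability. The main obstacle I anticipate is not the algebra — which is arranged so that \eqref{14a}--\eqref{13a} fall out automatically — but the measure-theoretic bookkeeping of the time change: checking that $\tilde a,\tilde g,\tilde h$ inherit Lebesgue measurability, that the preimage condition (a2) survives under composition with $\theta$, and that absolute continuity of the neutral part is preserved so the transformed problem is a bona fide solution. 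Once these routine verifications are in place, citing Theorem~\ref{theorem1} closes the argument, the passage from uniform exponential stability in $s$ to merely asymptotic stability in $t$ being exactly the price of the nonlinear reparametrisation $t\mapsto\beta(t)$.
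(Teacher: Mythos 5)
Your proposal is correct and follows essentially the same route as the paper: the substitution $s=\int_{t_0}^{t}b(\xi)\,d\xi$, under which the delays become $s-\tilde h(s)=\int_{h(t)}^{t}b$, $s-\tilde g(s)=\int_{g(t)}^{t}b$ and the coefficient normalizes to $b\equiv 1$, so that \eqref{14a}--\eqref{13a} become the hypotheses of Theorem~\ref{theorem1} and only asymptotic (not exponential) stability survives the nonlinear time change since $x(t)$ decays like $\exp\bigl(-\gamma\int_{t_0}^{t}b\bigr)$. The only cosmetic difference is that you apply the time change to the neutral equation itself and invoke Theorem~\ref{theorem1} as a black box, whereas the paper first passes to the infinite-series form via $(E-S)^{-1}$ and then changes variables, rerunning the Theorem~\ref{theorem1} argument for the transformed problem; your ``bookkeeping'' concerns (measurability, (a2), absolute continuity of $\beta^{-1}$) are exactly where the hypothesis $b\neq 0$ a.e.\ is used, via the Luzin property of $\beta$ and its inverse.
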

\begin{proof}
We follow the scheme of the proof for Theorem~\ref{theorem1}. 
Denote $y(t)=x(t)-a(t)x(g(t))$. Then  (\ref{1}) is equivalent to the equation with an infinite number of delays:
\begin{equation}\label{14b}
y'(t)=
 -  b(t)\sum_{j=0}^{\infty} \prod_{k=0}^{j-1} a\left(h( g^{[k]}(t)) \right) 
y \left( h(g^{[j]}(t) ) \right), 
\end{equation}
where 
$$
g^{[0]}(t)=t, ~g^{[1]}(t)=g(t),~ g^{[k]}(t) = g\left( g^{[k-1]}(t) \right),~ ~~k\in {\mathbb N}. 
$$
Let ${\displaystyle s=p(t):=\int_{t_0}^t b(\tau)d\tau,~s\geq 0,  z(s)=y(t)}$,
where $p(t)$ is a strictly increasing function.
Then we introduce $\tilde{h}(s)$ and $\tilde{g}(s)$
as follows:
$$
y(h(t))=z(\tilde{h}(s)), ~\tilde{h}(s)\leq s, ~ \tilde{h}(s)=\int_{t_0}^{h(t)} 
b(\tau)d\tau, ~ s-\tilde{h}(s)=\int_{h(t)} ^t b(\tau)d\tau, 
$$ 
$$
\tilde{g}(s)=\int_{t_0}^{g(t)} b(\tau)d\tau,~ s-\tilde{g}(s)=\int_{g(t)} ^t b(\tau)d\tau, ~\tilde{g}(s)\leq s,
$$
$$
\dot{y}(t)=b(t)\dot{z}(s),~~ \dot{y}(g(t))=b(g(t))\dot{z}(\tilde{g}(s)).
$$
Equation (\ref{14b}) can be rewritten in the form
\begin{equation}\label{14c}
z'(s)=
 -  \sum_{j=0}^{\infty} \prod_{k=0}^{j-1} a\left(\tilde{h}( \tilde{g}^{[k]}(s)) \right) 
z \left( \tilde{h}(\tilde{g}^{[j]}(s) ) \right). 
\end{equation}
Consider the initial value problem 
\begin{equation}\label{14d}
z'(s)=
 -  \sum_{j=0}^{\infty} \prod_{k=0}^{j-1} a\left(\tilde{h}( \tilde{g}^{[k]}(s)) \right)
z \left( \tilde{h}(\tilde{g}^{[j]}(s) ) \right)+f(s), ~~z(s)=0,~ s\leq 0
\end{equation}
with  $\|f\|_{[0,+\infty)}< +\infty$, $f(s)=0$ for $s \in [0, \tilde{\tau}]$. 

Problem (\ref{14d}) has a form of problem (\ref{11}) with $b(t)\equiv 1$, where condition (\ref{13a}) corresponds to (\ref{1a}).
As in the proof of Theorem~\ref{theorem1}, one can show that the solution $z$ of (\ref{14d}) is a bounded on $[t_0,\infty)$
function. Then equation (\ref{14c}) is exponentially stable. Hence for any solution $z$ of (\ref{14c}), $\lim\limits_{s\rightarrow\infty} z(s)$=0.
If $y$ is a solution of (\ref{14b}) then $\lim\limits_{t\rightarrow\infty} y(t)=\lim\limits_{s\rightarrow\infty} z(s)=0$. 
If $x$ is a solution of (\ref{11}) then $x=(E-S)^{-1}y$, therefore $\lim\limits_{t\rightarrow\infty} x(t)=0$ as well.
Finally, equation (\ref{1}) is asymptotically  stable.
\end{proof}

\section{Examples and Conclusion}

First, we compare the results of the present paper with Propositions~\ref{proposition2} and \ref{proposition2a}. 

\begin{example}
\label{example3}
Consider an equation with constant delays and variable coefficients
\begin{equation}
\label{ex2eq1}
\left( x(t)-(0.498+0.001\cos t)x(t-\pi) \right)^{\prime}=-r(0.9+0.1 \sin t)x(t-\pi),
\end{equation}
which allows comparison to known results. 
Denote $\overline{r}:= \displaystyle \limsup_{t \to\infty} r \int_{t-\pi}^t (0.9+0.1\sin s)~ds=r(0.9\pi+2)$.
Here $a_0=0.497$, $A_0=0.499$.
Corollary~\ref{main} b) 
implies exponential stability for $0.059<\overline{r}<0.109$,
while part a) gives $0<\overline{r}<0.0797$, which overall gives  exponential stability for $\overline{r}<0.109$.
Since 
$\sqrt{2 \left( 1-2A_0 \right) } \approx 0.0632$, the conditions of Proposition~\ref{proposition2a} 
hold for $0<\overline{r}<0.0632$. The assumptions of Proposition~\ref{proposition2} are satisfied for $0<\overline{r}<0.002$,
so Corollary~\ref{main} of Theorem~\ref{theorem1} leads to a sharper result. 
\end{example}

Next, we illustrate the results of the paper with examples for which all previous tests fail.
In the following two examples, we outline the role of $\alpha$ in Theorem~\ref{theorem1}.

\begin{example}
\label{example1}
Consider the equation
\begin{equation}\label{15}
\left( x(t)-0.6x(t-0.2 |\sin t|) \right)'=-x(t-0.14).
\end{equation}
Here $a(t)=0.6$, $\|b\|_{[t_0,\infty)} = 1$, $\tau=\delta=0.14$, $\sigma=0.2$.
Condition \eqref{12a} in Corollary~\ref{corollary3} for equation \eqref{15}
has the form
\begin{equation}\label{16}
\frac{0.4\alpha}{e}\leq 0.14<0.1+\frac{0.4\alpha}{e}.
\end{equation}
For $\alpha\in (0.1e,0.35e] \approx (0.272,0.951]$ condition \eqref{16} holds, in particular for $\alpha=0.5$. 
Hence by  Corollary~\ref{corollary3} equation 
\eqref{15} is uniformly exponentially stable. 
Note that for $\alpha=0$ and for $\alpha=1$ 
condition \eqref{16} fails. 

Propositions~\ref{proposition2} and \ref{proposition2a} are not applicable, as the delay of the neutral term is variable.
Moreover, the coefficient $a(t)$ in the neutral part in Propositions~\ref{proposition2} and \ref{proposition2a} must be less
than $0.5$, while in \eqref{16} $a(t)=0.6>0.5$. 
\end{example}


\begin{example}
\label{ex2new}
Consider an equation with variable coefficients and delays
\begin{equation}
\label{ex1eq1}
\left( x(t)-(0.5+0.1\cos t)x(g(t)) \right)^{\prime}=-r(0.9+0.1 \sin t)x(t-1), ~0.9 \leq t-g(t)\leq 1. 
\end{equation}
We have $\tau=\sigma=\delta=1$, $a_0=0.4$, $\|a\|_{[t_0,\infty)}=0.6$, $\|b\|_{[t_0,\infty)}=r$.
Then the assumptions of Theorem~\ref{theorem1} are equivalent to
$$
\alpha \frac{0.4}{er} \leq 1, ~~ r+r \frac{0.6^2}{0.4^2}< 0.4 \left( 1+ \frac{\alpha}{e} \right) ~~
\Leftrightarrow ~
  \frac{0.4\alpha}{e} \leq r \leq 0.4 \frac{4}{13}  \left( 1+ \frac{\alpha}{e} \right). $$
This corresponds to the values between two lines in $(\alpha,r)$ graph on Fig.~\ref{figure1}. The highest allowed value
of $r$ is achieved for $\alpha=1$ and corresponds to Part a) of Corollary~\ref{main}, $r < r_0 \approx 0.168$.
To the best of our knowledge, other known tests are not applicable to (\ref{ex1eq1}). 

\begin{figure}[ht]
\centering
\includegraphics[scale=0.4]{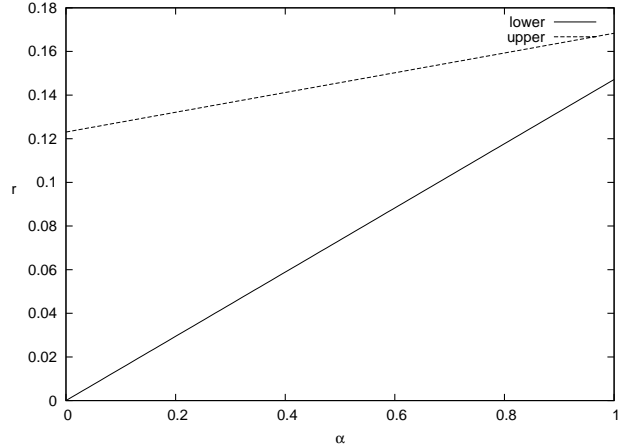} 
\caption{The lower and the upper bounds for $r$ deduced from Theorem~\protect{\ref{theorem1}}
for which equation (\protect{\ref{ex1eq1}}) is asymptotically stable for any $\alpha \in [0,1]$. } 
\label{figure1}
\end{figure}

\end{example}

\begin{example}
\label{ex3}
Consider an equation with an oscillating coefficient in the neutral term
\begin{equation}\label{20}
\left( x(t)-0.6 (\sin t) x(t-0.2|\cos t|) \right)'=-0.1(0.5+|\sin t|)x(t-0.5).
\end{equation}
Here 
$$
\|a\|_{[t_0,\infty)}=\|a^+\|_{[t_0,\infty)}=\|a^-\|_{[t_0,\infty)}=0.6, \|b\|_{[t_0,\infty)}=0.15, \tau=0.5, \sigma=0.2.
$$
The inequality in Theorem~\ref{theorem2} has the form $\displaystyle 0.4125< 0.4+0.147 \alpha$ which holds
for $\alpha>0.085$. Note that, as $\delta=\tau=0.5$ and $\bar{\tau}\approx 0.98$ in \eqref{bar_tau}, any $\alpha <0.5$ satisfies
$\alpha \bar{\tau}< \delta$. For $\alpha=0.45$ we get the correct inequality $0.4125<0.4147$,
hence Equation \eqref{20} is uniformly exponentially stable.

Propositions \ref{proposition2}, \ref{proposition2a} and Theorem~\ref{theorem1} 
fail for Equation \eqref{20}.
\end{example}

Finally, we apply Theorem~\ref{theorem3} to an equation with unbounded delays.

\begin{example}
\label{ex5}
Consider a pantograph-type neutral equation
\begin{equation}\label{22}
\left( x(t)- 0.55 x \left(\frac{t}{3} \right) \right)'=- \frac{1}{4t} x \left(\frac{t}{2} \right),~~t\geq t_0>0.
\end{equation}
Then in Theorem~\ref{theorem3} we have $\tilde{\sigma}=0.25 \ln 3$, $\tilde{\delta}=\tilde{\tau}=0.25 \ln 2$,
$\tau_0 \approx 0.1655<\tilde{\delta} \approx 0.173$, so any $\alpha$ can be used in \eqref{14a}.
Any $\alpha \geq 0.36$ implies asymptotic stability; in particular, for $\alpha=1$ 
inequality \eqref{13a} has the form $0.509<0.6$ and thus holds.
\end{example}

In the present paper, we considered neutral equation \eqref{1} in the most general framework:
all coefficients and delays are measurable functions. We also do not assume  
coefficient in  the neutral part $\|a(t)\| < \frac{1}{2}$, which is usually imposed,
see for example Propositions \ref{proposition2} and  \ref{proposition2a}. 
In four of the five examples, 
$a(t)$ can exceed 0.5.

The method used in the paper is based on the Bohl-Perron theorem and a transformation
of the given neutral equation to an equation with an infinite number of delays.
This scheme is applied to neutral equations in the Hale form for the first time,
and we illustrated its efficiency.

\section*{Acknowledgment}

The second author was partially supported by the NSERC research grant RGPIN-2015-05976.

\end{document}